\documentclass[11pt,a4paper,oneside]{amsart}

\usepackage{amsthm}
\usepackage[T1]{fontenc}
\usepackage[utf8]{inputenc}
\usepackage{lmodern}
\usepackage{microtype}
\usepackage{geometry}
\usepackage{amsmath,amssymb,amsthm,mathtools}
\usepackage{hyperref}
\usepackage{fancyhdr}
\usepackage{epigraph}

\usepackage{xcolor}

\numberwithin{equation}{section}

\newcommand{\hide}[1]{}

\newtheorem{theorem}{Theorem}[section]

\newtheorem{lemma}[theorem]{Lemma}

\newtheorem{maintheorem}{Theorem}

\theoremstyle{definition}

\theoremstyle{remark}
\newtheorem{remark}[theorem]{Remark}

\newcommand{\C}{\mathbb C}
\newcommand{\D}{\mathbb D}

\newcommand{\dd}{\,\mathrm d}
\newcommand{\dxdy}{\,\mathrm dx\,\mathrm dy}
\DeclareMathOperator{\supp}{supp}
\DeclareMathOperator{\dist}{dist}
\DeclareMathOperator{\diam}{diam}

\newcommand{\Chat}{\widehat{\mathbb C}}

\DeclareMathOperator{\Pol}{Pol}

\newcommand{\A}{{\mathcal A}}
\newcommand{\Cc}{\widehat{\mathbb C}}
\newcommand{\sm}{\setminus}
\newcommand{\ovl}{\overline}

\renewcommand{\ge}{\geqslant}
\renewcommand{\le}{\leqslant}
\renewcommand{\geq}{\geqslant}
\renewcommand{\leq}{\leqslant}

\renewcommand{\phi}{\varphi}

\title[Bounded-orbit wandering domains do not exist]{Bounded-orbit wandering domains do not exist}

\author[K.~Drach]{Kostiantyn Drach}
\address{Departament de Matem{\`a}tiques i Inform{\`a}tica, Universitat de Barcelona, Gran Via 585, 08007, Barcelona, Spain, and Centre de Recerca Matem{\`a}tica, Edifici C, Carrer de l'Albareda, Bellaterra, Barcelona, Spain}
\email{kostiantyn.drach@ub.edu}

\author[L.~Pardo-Sim\'on]{Leticia Pardo-Sim\'on}
\address{Departament de Matem{\`a}tiques i Inform{\`a}tica, Universitat de Barcelona, Gran Via 585, 08007, Barcelona, Spain, and Centre de Recerca Matem{\`a}tica, Edifici C, Carrer de l'Albareda, Bellaterra, Barcelona, Spain}
\email{lpardosimon@ub.edu}

\author[B.~U\v{c}akar]{Beno U\v{c}akar}
\address{
Faculty of Mathematics and Physics, University of Ljubljana, Jadranska 19, 1000 Ljubljana, Slovenia \newline  \indent
Faculty of Mathematics and Computer Science, University of Barcelona, Gran Via de les Corts Catalanes, 585, 08007 Barcelona, Spain \newline \indent
Institute for Mathematics, Physics and Mechanics, Jadranska 19, 1000 Ljubljana, Slovenia 
}
\email{beno.ucakar@imfm.si}

\date{23 September 2026}

\begin{document}
\maketitle
\vspace{-1.5em}
\begin{abstract}
This paper establishes that transcendental entire functions do not have wandering Fatou components whose forward orbit is bounded. This settles a long-standing question in transcendental dynamics. A  stronger statement is proved using the same method: the orbit of every point in a wandering domain of
a transcendental \emph{meromorphic} function is unbounded. In particular, transcendental meromorphic functions do not admit \emph{orbitally bounded} wandering domains. The proofs rely mostly on uniform estimates for hyperbolic area, following the approach of Ye~\cite{Ye}.
\end{abstract}

\section{Introduction}

Let $f\colon\C\to\Chat$ be a transcendental meromorphic function, including the entire case. We denote its Fatou set by $F(f)$ and its Julia set by $J(f)$. 
A \emph{Fatou component} of $f$ is a connected component of $F(f)$. For such a component $U$, let $U_n$ denote the Fatou component containing $f^n(U)$, $n \ge 0$, with $U_0 = U$. Thus, $f^n(U)\subset U_n$ and $f(U_n)\subset U_{n+1}.$
The Fatou component $U$ is called a \emph{wandering domain} if the components $U_n$ are pairwise distinct. The first result is the following.

\begin{maintheorem}[No bounded-orbit wandering domains]
\label{thm:mainA}
Let $U$ be a wandering Fatou component of a transcendental entire function $f$. Then $\bigcup_{n\geq0}f^n(U)$ is unbounded.
\end{maintheorem}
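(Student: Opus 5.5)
We argue by contradiction: suppose $\bigcup_{n\ge0} f^n(U)\subset D(0,R)$ for some $R>0$. Following Ye's hyperbolic-area approach, the plan is to show that the hyperbolic areas of the components $U_n$ (or of the subsets $f^n(U)$) must shrink in a way that is incompatible with the fact that $f$ is transcendental near the bounded region.

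\textbf{Step 1: normalisation.} First we record what boundedness buys us. Every limit function of $\{f^n|_U\}$ is constant; this holds for any wandering domain of an entire function, since limit functions are constant and here they are finite. Since $J(f)$ is unbounded and perfect, we can pick three points of $J(f)$, or more usefully a fixed finite set of repelling periodic points $E$ outside $\overline{D(0,R)}$. Their backward orbits accumulate everywhere on $J(f)$. Hence every $U_n$ lies in a fixed hyperbolic domain $W=\C\setminus E$, with density $\rho_W$ bounded above and below on $\overline{D(0,R)}$. The Euclidean and hyperbolic metrics on the sets $f^n(U)$ are therefore uniformly comparable.

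\textbf{Step 2: area estimate.} Next we use that the $U_n$ are pairwise disjoint and contained in $D(0,R)$. This gives $\sum_n \operatorname{area}(U_n)\le \pi R^2$, so $\operatorname{area}(U_n)\to 0$. Also $\operatorname{diam} f^n(K)\to0$ for compact $K\subset U$, by the non-expansion of the hyperbolic metric combined with the constant limits. For a small disc $\Delta\subset U$ we would then apply the Schwarz--Pick lemma to $f\colon U_n\to U_{n+1}$. If instead we compare hyperbolic areas in a common domain, as in Ye, we aim for the inequality
\[
\operatorname{area}_{U_{n+1}} f^{n+1}(\Delta)\le \operatorname{area}_{U_n} f^{n}(\Delta),
\]
with strict contraction by a uniform factor whenever $f|_{U_n}$ is not close to a hyperbolic isometry. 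The key claim to prove is that infinitely many steps give a definite contraction factor $c<1$. This would force the hyperbolic area of $f^n(\Delta)$ to go to $0$ at a geometric rate.

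\textbf{Step 3: lower bound from transcendence (main obstacle).} This step must contradict Step 2 by producing a positive lower bound. Since the orbit stays in the compact disc $\overline{D(0,R)}$, $f$ is a polynomial-like map of bounded derivative there, namely $|f'|\le M$ on $D(0,R)$. At first sight that gives Euclidean contraction bounds in the wrong direction. The idea is instead to use that $f$ is transcendental, so $f^{-1}(D(0,R))$ has infinitely many components or an unbounded one. Applying the Riemann--Hurwitz-type estimate for hyperbolic area to coverings, the pullback $f^{-n}$ of the fixed bounded region must carry its hyperbolic area along with multiplicity. Concretely, we would show that the hyperbolic area of $U_n$ relative to $\C\setminus J$-sets is bounded below uniformly, using uniform density bounds from Step 1, because $U_n$ is a component of the preimage of $U_{n+1}$ under a map of finite local degree on the bounded set. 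Then $\operatorname{area}(U_n)\ge c'>0$ in the Euclidean sense, contradicting $\sum\operatorname{area}(U_n)<\infty$. Making this lower bound uniform is the delicate point. I would first try the argument of Ye: bound the degree of $f|_{U_n}$ by the number of critical points in $D(0,R)$, which is finite. Eventually the $U_n$ contain no critical points, so $f\colon U_n\to U_{n+1}$ is a covering of bounded degree. Hyperbolic area considerations then force equality in Schwarz--Pick, and hence eventually conformal isomorphisms with comparable Euclidean sizes, contradicting $\operatorname{area}(U_n)\to0$ via Koebe distortion on the bounded region.
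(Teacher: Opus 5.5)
Your proposal has no working contradiction mechanism, because both Step~2 and Step~3 fail as stated.

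In Step~2, the claimed ``definite contraction factor $c<1$ infinitely often'' is false for the intrinsic metrics. As you yourself conclude at the end, $f\colon U_n\to U_{n+1}$ is eventually a conformal isomorphism, hence an isometry from $\rho_{U_n}$ to $\rho_{U_{n+1}}$. So $\A_{\rho_{U_{n+1}}}(f^{n+1}(\Delta))=\A_{\rho_{U_n}}(f^{n}(\Delta))$ exactly, and nothing decays. For a fixed comparison metric $\rho_W$ with $W=\C\setminus E$, there is no Schwarz--Pick monotonicity at all. Moreover, decay of $\rho_W$-areas is automatic, since they are comparable to Euclidean areas, which are summable; so such decay can never be contradicted.

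Step~3 is circular. For any fixed finite puncture set, a lower bound on the hyperbolic area of $U_n$ uniform in $n$ is false, since these areas are summable. For $\C\setminus J(f)$, the metric on $U_n$ is the intrinsic one, whose area is infinite and carries no Euclidean information.

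The closing Koebe argument is also wrong. A conformal isomorphism $f\colon U_n\to U_{n+1}$ with $|f'|\le M$ only gives $\operatorname{area}(U_{n+1})\le M^2\operatorname{area}(U_n)$. Nothing prevents geometric shrinking, as in univalent dynamics near an attracting cycle. Koebe distortion controls a single univalent map, not sizes along an orbit. Finally, upgrading an unramified finite-degree covering $U_n\to U_{n+1}$ to an isomorphism requires $U_{n+1}$ to be simply connected. That needs Baker's theorem on Fatou components containing bounded orbits, which you never invoke.

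The missing idea runs in the opposite direction. In a \emph{common} finite-puncture metric, area cannot \emph{decrease} much along the orbit, yet it must tend to zero, while its starting value can be made arbitrarily large.

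\begin{itemize}
\item Take increasing finite forward-invariant sets $P_j$ (unions of repelling cycles) with $\ovl{\bigcup_j P_j}=J(f)$, and set $\rho_j=\rho_{\C\setminus P_j}$. Then $\A_{\rho_j}(\C\setminus P_j)=\frac{\pi}{2}(\#|P_j|-1)<\infty$, so $\A_{\rho_j}(U_n)\to0$ as $n\to\infty$ for each fixed $j$.
\item The key lemma is the deficit bound $\int_K[\rho_j^2-(f^*\rho_j)^2]_+\dxdy\le C$, with $C$ independent of $j$. On proper local charts $f\colon A_\ell\to D_\ell$, forward invariance of $P_j$ gives $\rho_j\le f^*\rho_{D_\ell\setminus(P_j\cup V_\ell)}$, so $f$ is nearly \emph{expanding}. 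The excess $\rho^2_{D_\ell\setminus(P_j\cup V_\ell)}-\rho_j^2$ has integral bounded independently of $j$ by the localization and puncture-cost lemmas.
\item Telescope along the conformal isomorphisms $f\colon U_n\to U_{n+1}$, $n\ge N$, using that the $U_n$ are pairwise disjoint subsets of $K$. This yields $\A_{\rho_j}(U_N)\le C$ for every $j$.
\item Letting $j\to\infty$ with $\rho_j\uparrow\rho_{U_N}$ gives $\A_{\rho_{U_N}}(U_N)\le C$. This contradicts the infinite hyperbolic area of the simply connected domain $U_N$.
\end{itemize}

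No lower bound coming from transcendence is involved anywhere.
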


Note that the statement above does not by itself exclude the possibility that every point of $U$ has a bounded orbit, even though the union of these orbits is unbounded. A wandering domain $U$ is called \emph{orbitally bounded} if $\sup_{n\geq0}|f^n(z)|<\infty$ for every $z\in U$. By normality and since all limit functions are constant (see \cite[Section~4.5]{Bergweiler}), this condition holds for one point of $U$ if and only if it holds
for every point.

The following theorem rules out such domains for all
transcendental meromorphic functions.

\begin{maintheorem}[No orbitally bounded wandering domains]
\label{thm:mainB}
Let $U$ be a wandering Fatou component of a transcendental meromorphic function $f\colon\C\to\Chat$. 
Then
\[\limsup\limits_{n \to \infty} |f^n(z)| = \infty \qquad \text{for every }z \in U.\]
In particular, $f$ has no orbitally bounded wandering domains. 
\end{maintheorem}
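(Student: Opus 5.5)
Suppose, for contradiction, that $U$ is a wandering domain and some $z_0\in U$ has $|f^n(z_0)|\le R$ for all $n$. By normality and the constancy of limit functions, every point of $U$ then has a bounded orbit, and $f^n(z)-f^n(z_0)\to 0$ locally uniformly on $U$. The plan follows Ye's hyperbolic-area approach: show that the components $U_n$ meeting a fixed disc cannot both be infinitely many and pairwise disjoint with the control that the dynamics forces.

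\textbf{Step 1 (accumulation in a compact region).} Fix a compact disc $K\subset U$ around $z_0$. Because the limit functions are constants in the bounded set $\{|w|\le R\}$, for large $n$ the sets $f^n(K)$ have spherical (indeed Euclidean) diameter tending to zero and lie in $D=\{|w|<R+1\}$. The $U_n$ are pairwise disjoint and infinitely many meet $D$, so only finitely many can have Euclidean area greater than any fixed $\varepsilon$ inside $D$. Thus the ``hyperbolic thickness'' of $U_n$ near $f^n(z_0)$ shrinks: $\dist(f^n(z_0),\partial U_n)\to 0$.

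\textbf{Step 2 (uniform hyperbolic-area estimate).} This is the core and the main obstacle. I would prove a lemma: there is $c>0$ depending only on $f$ and $R$ such that for every hyperbolic domain $V$ and every compact $K'\subset V\cap D$, the hyperbolic area of $f(K')$ in the component of $F(f)$ containing it is at least comparable to that of $K'$ in $V$, or more usefully, that the hyperbolic area $\mathrm{area}_{U_n}(f^n(K))$ is bounded below by a positive constant independent of $n$. Since $f$ is transcendental, $J(f)$ is unbounded and contains infinitely many points; using Picard-type omitted-value arguments, $f$ restricted to preimages of a small disc near $f^n(z_0)$ has the property that $U_{n+1}$ omits at least three fixed points of $J(f)$, so the hyperbolic density on $U_n$ is comparable to $1/(d\log(1/d))$ with $d=\dist(\cdot,\partial U_n)$. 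Combining with Schwarz--Pick (hyperbolic area does not increase under $f$, i.e.\ $\mathrm{area}_{U_{n}}(f^n(K))\le \mathrm{area}_U(K)$) I would aim for a matching lower bound via a counting/Koebe argument on univalent branches, following Ye.

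\textbf{Step 3 (contradiction).} With the lower bound, the Euclidean areas of the $f^n(K)$ compared to $d_n^2\log^2(1/d_n)$ stay bounded below; meanwhile the disjoint $U_n\cap D$ have summable area, forcing $d_n\to0$ fast, while the Schwarz lemma on the contraction $f^n(z)-f^n(z_0)\to0$ together with the density estimate produces incompatible rates. For the meromorphic case poles are handled because the orbit stays in $D$, so only finitely many poles and their preimages within $D$ matter; I expect the genuinely delicate point to be making the lower bound in Step 2 uniform in $n$ when $U_n$ may be multiply connected, which I would treat by passing to simply connected subdomains containing $f^n(K)$ via the filled-in sets and using that orbits stay bounded.
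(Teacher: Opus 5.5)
\paragraph{Step~2 cannot be carried out as formulated.} All of the proof would have to happen in Step~2, and your proposal does not supply it.

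\begin{itemize}
\item The first version of your lemma is false. Shrinking $V$ around $K'$ makes the hyperbolic area of $K'$ in $V$ arbitrarily large, while the area of $f(K')$ in its Fatou component does not change.
\item The second version, a lower bound on $\A_{\rho_{U_n}}(f^n(K))$ independent of $n$, runs against Schwarz--Pick. Since $f^*\rho_{U_{n+1}}\le\rho_{U_n}$, hyperbolic area can only decrease along the orbit. In general wandering domains (e.g.\ contracting ones) the hyperbolic area of $f^n(K)$ in $U_n$ does tend to zero. Nothing in your sketch uses the bounded-orbit hypothesis to reverse this.
\item The density claim is wrong. $1/(d\log(1/d))$ is at best a lower bound for $\rho_{U_n}$ up to constants. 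For simply connected $U_n$ one has $\rho_{U_n}\asymp 1/d$.
\end{itemize}

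\paragraph{Step~3 would not give a contradiction even if Step~2 held.} A hyperbolic area bounded below is fully compatible with $\sum_n \mathrm{area}(U_n\cap D)<\infty$ and with $d_n\to0$ at any rate. This is because the hyperbolic density blows up at $\partial U_n$: hyperbolic balls of fixed radius in $U_n$ have fixed hyperbolic area, yet their Euclidean area tends to zero. ``Incompatible rates'' is not an argument.

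\paragraph{The missing ingredient.} You need a global area budget that is uniform along a finite-puncture approximation of $J(f)$. You also need to test it on sets of \emph{prescribed large} hyperbolic area, not on $f^n(K)$. The paper does this as follows.
\begin{itemize}
\item Take finite forward-invariant unions $P_j$ of repelling cycles with $\overline{\bigcup_j P_j}=J(f)$. Then $\rho_j=\rho_{\C\setminus P_j}$ has finite total area $\frac{\pi}{2}(\#|P_j|-1)$, and $\rho_j\uparrow\rho_{U_n}$ on each $U_n$.
\item Lemma~\ref{lem:deficit} shows that on a compact pole-free neighbourhood $K$ of the orbit closure, $\int_K[\rho_j^2-(f^*\rho_j)^2]_+\dxdy\le C$, with $C$ independent of $j$.
\item Work with the hyperbolic balls $B_n(t)$ of radius $t$ about $z_n$ in $U_n$. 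They satisfy $f(B_n(t))\subset B_{n+1}(t)$ and shrink to $z_n$ in the Euclidean sense.
\item For large $n$ they are injective images of $(\tanh t)\D$ under the universal cover $\pi_n$, so $\A_{\rho_{U_n}}(B_n(t))=\pi\sinh^2 t$, and $f$ is eventually injective on them. Your ``filled-in'' idea appears here: by the maximum principle, every Jordan curve in $B_n(t)$ bounds a disc in $U_n$. This is how multiply connected $U_n$ are handled.
\item Telescope $\A_{\rho_j}(B_n(t))-\A_{\rho_j}(B_{n+1}(t))\le\int_{B_n(t)}[\rho_j^2-(f^*\rho_j)^2]_+\dxdy$ over the disjoint balls lying in $K$. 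The tail vanishes because the total $\rho_j$-area is finite. This gives $\A_{\rho_j}(B_N(t))\le C$.
\item Letting $j\to\infty$ gives $\pi\sinh^2 t\le C$, which is false for large $t$.
\end{itemize}
So the contradiction comes from a uniform \emph{upper} bound on hyperbolic area obtained through the finite-puncture approximation. It does not come from a lower bound on the area of $f^n(K)$.
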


\begin{remark}
    No restriction is imposed on the number of poles or on the boundedness or
    connectivity of the components $U_n$. 
    Moreover, note that the above statement is equivalent to the existence of a sequence
    $n_k\to\infty$ such that
    $f^{n_k}\longrightarrow\infty$ locally uniformly on $U$. This answers a question posed by Bergweiler in \cite[Section~4.5, Question~8]{Bergweiler} affirmatively.
\end{remark}

The proof follows the idea of uniform estimates in the hyperbolic metric of \cite{Ye} (see also the next subsection).

The paper is structured as follows. The remainder of the introduction provides some historical context for the bounded-orbit question. In Section~\ref{Sec:Preliminaries}, standard facts from hyperbolic geometry are recalled, and uniform area estimates in the hyperbolic metric are derived in both dynamical and non-dynamical settings. These estimates are used in Section~\ref{Sec:ThmA} to prove Theorem~\ref{thm:mainA}. Although this result follows from Theorem~\ref{thm:mainB}, an independent proof is included to highlight the key ideas. Finally, Theorem~\ref{thm:mainB} is proved in Section~\ref{Sec:ThmB}, using an additional eventual injectivity property on certain hyperbolic balls.

\subsection{Some history of the question}
The question of bounded orbits in wandering domains belongs to the broader problem of understanding which features of polynomial and rational dynamics persist for transcendental functions. Sullivan's no-wandering-domain theorem~\cite{Sul85} settled the problem for rational maps. For transcendental entire functions, Baker~\cite{Bak76} had already exhibited multiply connected escaping wandering
domains, using an infinite-product construction from his earlier work. More generally, every multiply connected Fatou component of a transcendental entire function is bounded and wandering, and the iterates tend locally uniformly to infinity there; see~\cite{Bak84,Bergweiler}. Simply connected escaping examples were also constructed by Herman~\cite{Her84} and Baker~\cite{Bak84}. Eremenko and Lyubich's approximation constructions, announced in~\cite{EL84} and presented in detail in~\cite{EL87}, produced the first \emph{oscillating} wandering domains, whose point orbits accumulate both at finite points and at infinity. These examples have infinitely many finite constant limit functions.

The bounded-orbit question was explicitly raised in~\cite{EL84}, and versions appeared in the problem
lists~\cite{BBH84,BH89}. In the later compilation by Hayman and Lingham~\cite{HL19}, Problem~2.67, attributed to Eremenko, asks whether the set of constant limit functions can be infinite and bounded. Problems~2.77 and~2.87, attributed respectively to Baker, Herman and Kra, and to Herman, Eremenko and Lyubich, ask for a wandering domain whose full forward orbit is bounded. Bergweiler \cite[Section~4.5, Question~8]{Bergweiler} formulated the question for transcendental meromorphic functions: must every wandering domain admit a subsequence of iterates converging locally uniformly to infinity? Theorem~\ref{thm:mainB} answers this question affirmatively.

Several partial nonexistence results were obtained under restrictions on the singular values. Adapting Sullivan's quasiconformal deformation method, Eremenko and Lyubich~\cite{EL84,EL92} and, independently, Goldberg and Keen~\cite{GK86} proved that entire functions with finitely many singular values have no wandering domains. Baker, Kotus, and L\"u~\cite{BKL92} extended this
nonexistence theorem to transcendental meromorphic
functions with finitely many singular values. Their proofs rely on finite-dimensional deformation spaces,
leaving the bounded-orbit question open for general entire functions.

Restrictions on limit functions provided another approach. Bergweiler, Haruta, Kriete, Meier, and Terglane~\cite{BHKMT93} proved that every finite limit function on an entire wandering domain is a non-isolated point of the postsingular set. Zheng~\cite{Zhe03} obtained a corresponding result for
meromorphic functions using the hyperbolic metric. Such restrictions yield no-wandering-domain results when combined with additional information about the singular orbits.

The Eremenko--Lyubich class $\mathcal B$, consisting of transcendental entire functions with bounded singular sets, was a natural setting for the bounded-orbit question. Eremenko and Lyubich~\cite[Theorem~1]{EL92} proved that Fatou points of functions in this class cannot escape to
infinity. Nevertheless, Bishop~\cite{Bis15} constructed oscillating wandering domains in $\mathcal B$ by quasiconformal folding. Fagella, Jarque, and Lazebnik~\cite{FJL19} subsequently constructed oscillating wandering domains in $\mathcal B$ with univalent dynamics throughout the wandering orbit.
Using quasiregular interpolation, Mart\'i-Pete and Shishikura~\cite{MS20} obtained oscillating examples for functions in $\mathcal B$ of finite order, including order $1/2$.  For real functions in $\mathcal B$ whose singular values are all real, the rigidity results of Rempe and van Strien~\cite{RvS15} imply the absence of orbitally bounded wandering domains; see also
\cite[Proposition~5.3]{MRG13}. Under an additional sector condition, Mihaljevi\'c-Brandt and Rempe~\cite{MRG13} excluded all wandering domains using estimates for the hyperbolic metric. For further results on absence of wandering domains for different classes of meromorphic functions, see \cite{BM,BFJK}. For a recent account of the approximation constructions and their
development from the work of Eremenko and Lyubich, see the survey \cite{FP26}.

The holomorphic hypothesis is also significant. Nicks \cite{Nic13} constructed a quasiregular map of the plane with an essential singularity at infinity and a wandering orbit contained in a bounded region. Hence an analogous exclusion does not hold for general quasiregular maps. 

The method used here is motivated by the recent work of Ye \cite{Ye}. Using hyperbolic area, Ye gave a proof of Sullivan's theorem and of the finite-singular-set theorem without quasiconformal deformation theory. His argument also applies to entire functions with compact singular set
whose accumulation points lie in the Fatou set. We develop this approach to rule out bounded wandering orbits without assumptions on the singular set.

\subsection*{AI Disclaimer and Acknowledgments}

This paper was obtained with the use of AI (ChatGPT-6 Astra). The authors checked the mathematics and take full responsibility (and, if necessary, the blame) for the content.

Confirming a belief shared by many experts in the field, once an alternative proof to Sullivan's No Wandering Domains Theorem was found by Ye \cite{Ye}, also with the use of AI, it became apparent that the same method could be applied to one of the main open questions in transcendental dynamics: the non-existence of wandering domains with bounded orbits. The proof, and the route to it, turned out to be surprisingly simple; only a few targeted prompts were needed. We were not the only ones to explore this possibility: parallel and independent work was carried out by Prochorov, Rempe, and Waterman~\cite{PRW}.

The mathematical content of this paper, as well as the fun of writing it, was shared ``live'' with the members of the Holomorphic Dynamics group at the University of Barcelona. We thank them all: Jordi Canela, Núria Fagella, Xavier Jarque, Anna Jové, and especially Gustavo R.\ Ferreira. Perhaps this is one of the new ways in which mathematical research will be done in the age of AI. 
A result might no longer be attributed to individuals 
but rather to the community that cares to understand it, refine it, and expand upon it. The authors do not doubt that this work will prompt further research, with or without the use of AI.

The authors were partially supported by Agencia Estatal de Investigaci\'on grants PID2023-147252NB-I00, CNS2025-166633, by the Severo Ochoa and Mar\'ia de Maeztu Program for Centers and Units of Excellence in R\&D (CEX2020-001084-M), and by the research program P1-0291 from ARIS, Republic of Slovenia. LP is a Serra Húnter fellow.

\section{Preliminaries}
\label{Sec:Preliminaries}

\subsection{Preliminaries from hyperbolic geometry}
\label{SSec:PrelimHyp}

In this subsection, we collect some standard facts from hyperbolic geometry in complex dimension one; see, e.g., \cite{BM}. 

For a hyperbolic domain $G \subset \C$, let $\rho_G$ be the complete Poincar\'e density of the hyperbolic metric of curvature $\kappa=-4$, with the standard normalization $\rho_{\D}(z)={1}/({1-|z|^2})$, where $\D \subset \C$ is the unit disk. We denote by $d_G$ the corresponding hyperbolic distance function in $G$. The curvature equation gives
\begin{equation}\label{eq:curvature}
\Delta\log\rho_G=4\rho_G^2,
\end{equation}
where $\Delta=\partial_x^2+\partial_y^2$ is the Laplacian and $z = x+iy$.

For a holomorphic map $h\colon H\to G$ between two hyperbolic domains, the \emph{pullback metric} on $H$ has density $(h^*\rho_G)(z) :=\rho_G(h(z))|h'(z)|$. By the Schwarz--Pick lemma, we have the comparison
\[
h^*\rho_G\leq\rho_H, \qquad\text{and}
\qquad
h^*\rho_G=\rho_H\quad\text{if and only if $h$ is a covering}.
\]
In particular, if $G\subset H$, then $\rho_G\geq\rho_H$ on $G$.

For a nonnegative density $\rho$ and a Borel set $A$ in its domain, put 
\[
\A_\rho(A)=\int_A\rho(z)^2\dxdy 
\]
for the area with density $\rho$. (When no confusion can arise, we write $\int_A \rho^2\dxdy$ for $\A_\rho(A)$, omitting the variable in the density.) If $h \colon H \to G$ is injective, then for $A \subset H$ we have the change-of-variables formula
\begin{equation*}
\label{eq:change-variables}
\A_{\rho_G}(h(A))=\int_A(h^*\rho_G)^2\dxdy.
\end{equation*}
If $h \colon H \to G$ is finite-to-one, and $q \ge 0$ is a Borel function, then the corresponding formula becomes
\begin{equation*}
\label{eq:multiplicity}
\int_A q(h(z))|h'(z)|^2\dxdy
=\int_{h(A)}q(w)N_A(w)\,\mathrm d(\Re w)\,\mathrm d(\Im w),
\end{equation*}
where $N_A(w) = \#|\{z \in A : h(z) = w\}|$ is the number of preimages in $A$. Critical values have zero planar measure, so multiplicities at those values do not affect the integral.

For a finite set $P\subset\C$ with cardinality $\#|P| \geq2$, applying the Gauss--Bonnet formula to the sphere $\Cc$ punctured at $P\cup\{\infty\}$, and using the normalization of the curvature $\kappa = -4$, yields
\[
-4\cdot \A_{\rho_{\C\setminus P}}(\C\setminus P) = \int_{\C \sm P} \kappa \, \rho_{\C \sm P}^2 \dxdy = 2\pi\chi(\C \sm P) = 2\pi(1 - \#|P|).
\]
Hence, the total hyperbolic area 
\begin{equation}
    \label{eq:finite-area}
    \A_{\rho_{\C \sm P}} (\C \sm P) = \frac{\pi}{2}(\#|P|-1)   
\end{equation}
is finite.

\subsection{Comparison of hyperbolic metrics with respect to domains}
\label{SSec:HyperDom}

The following lemma will play a key role in the finite exhaustion argument and the area estimates in the proofs of the main results. This lemma and its corollary (Lemma~\ref{lem:punctures}) are the finite-puncture facts used in \cite[Lemmas~2.2--2.3]{Ye}.

For a set $A \subset \C$, let $\ovl A$ denote the closure in $\C$. 

\begin{lemma}[Recovery of the intrinsic metric]
\label{lem:recovery}
Let $E\subset\C$ be a closed set. Suppose that $P_j\subset E$ are increasing finite sets, $\#|P_1|\geq2$, that exhaust $E$:
\[
\overline{\bigcup_{j\geq1}P_j}=E.
\]
Then on every connected component $G$ of $\C\setminus E$, we have the monotone convergence
\[
\rho_{\C \sm P_j}\big|_G \uparrow \rho_G \quad \text{locally uniformly in $G$.}
\] 
In particular, for every Borel set $A\subset G$,
\[
\lim_{j \to \infty} \A_{\rho_{\C \sm P_j}}(A) = \A_{\rho_G}(A).
\]
\end{lemma}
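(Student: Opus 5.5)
Monotonicity comes first and is immediate. Since $P_j\subset P_{j+1}$, we have $\C\sm P_{j+1}\subset\C\sm P_j$. Also $G\subset \C\sm E\subset \C\sm P_{j+1}$. The comparison principle from Subsection~\ref{SSec:PrelimHyp} (if $G\subset H$ then $\rho_G\ge\rho_H$) therefore gives
\[
\rho_{\C\sm P_j}\le\rho_{\C\sm P_{j+1}}\le\rho_G \qquad\text{on } G.
\]
So the sequence increases pointwise to a limit $\rho_\infty\le\rho_G$. It remains to show $\rho_\infty\ge\rho_G$ and that the convergence is locally uniform. The area statement then follows from the monotone convergence theorem applied to $\rho_{\C\sm P_j}^2\mathbf 1_A$.

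\textbf{Lower bound.} I will use universal coverings. Fix $z_0\in G$ and let $\pi_j\colon\D\to\C\sm P_j$ be the universal covering with $\pi_j(0)=z_0$. Then $\rho_{\C\sm P_j}(z_0)=1/|\pi_j'(0)|$, and $|\pi_j'(0)|$ is nonincreasing in $j$.

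The family $\{\pi_j\}$ omits the two points of $P_1$, so by Montel's theorem it is normal. A subsequence converges locally uniformly to some $\pi$, which is either holomorphic or identically $\infty$. The latter is impossible since $\pi_j(0)=z_0$. If $\pi$ is constant, then $\pi'(0)=0$, so $\rho_{\C\sm P_j}(z_0)\to\infty$, which contradicts the bound $\rho_{\C\sm P_j}(z_0)\le\rho_G(z_0)<\infty$. Hence $\pi$ is nonconstant.

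\textbf{Main step: $\pi(\D)\subset G$.} Take $p\in E$. Since $E=\overline{\bigcup_j P_j}$, for every $\varepsilon>0$ there is a point of $P_k$ within $\varepsilon$ of $p$. All $\pi_j$ with $j\ge k$ omit that point. If $\pi(w)=p$ for some $w\in\D$, Hurwitz's theorem gives a small disk around $w$ and $\delta>0$ such that, for large $j$, $\pi_j$ covers the whole disk $D(p,\delta)$ of values. Choosing $\varepsilon<\delta$ produces an omitted point inside $D(p,\delta)$, a contradiction. So $\pi(\D)\subset\C\sm E$. Because $\pi(\D)$ is connected and contains $z_0$, we get $\pi(\D)\subset G$.

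By Schwarz--Pick, $\pi^*\rho_G\le\rho_\D$. At $0$ this reads $\rho_G(z_0)|\pi'(0)|\le1$, so
\[
\rho_G(z_0)\le \frac{1}{|\pi'(0)|}=\lim_j\rho_{\C\sm P_j}(z_0).
\]
Since the limit of $\rho_{\C\sm P_j}(z_0)$ exists by monotonicity, this bound holds along the full sequence, and $\rho_\infty=\rho_G$ pointwise.

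\textbf{Local uniformity.} The densities $\rho_{\C\sm P_j}$ and $\rho_G$ are continuous, and the convergence is monotone to a continuous limit. Dini's theorem on compact subsets of $G$ therefore upgrades pointwise convergence to locally uniform convergence.

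The main obstacle is the step $\pi(\D)\subset G$, i.e.\ ruling out that the limit covering map hits points of $E$. It uses exactly the density of $\bigcup_j P_j$ in $E$, together with Hurwitz's theorem in its open-mapping form for the non-injective maps $\pi_j$.
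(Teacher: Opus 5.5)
Your argument is correct, but it takes a different route from the paper. The paper proves the lemma in one line: it observes that $(\C\sm P_j,z_0)\to(G,z_0)$ in the Carath\'eodory topology and invokes the general result \cite[Proposition~4.4]{McM}. That result says Carath\'eodory convergence of pointed hyperbolic domains implies locally uniform convergence of the hyperbolic densities, because the normalized universal coverings converge to the universal covering of the limit domain.

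You instead prove the needed special case by hand, and there are three differences worth noting:
\begin{itemize}
\item Your Rouch\'e step showing $\pi(\D)\cap E=\emptyset$ is exactly the verification of the kernel condition: a connected open set about $z_0$ that avoids every $P_j$ must lie in $G$. The paper leaves this verification implicit.
\item The nesting of the $P_j$ gives you the upper bound $\rho_{\C\sm P_j}\le\rho_G$ for free. So you need only the one-sided Schwarz--Pick inequality $\rho_G(z_0)\le 1/|\pi'(0)|$, together with the observation that the limit along your subsequence equals the monotone limit. You never have to show that the limit map $\pi$ is a covering of $G$, which is the substantive part of the general theorem.
\item Dini's theorem supplies the local uniformity that the cited result gives directly.
\end{itemize}

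The paper's route is shorter and relies on a standard black box. Yours is self-contained, using only Montel, Rouch\'e, Schwarz--Pick and Dini. It depends essentially on monotonicity, which is available here anyway.
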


\begin{proof}
The proof is a corollary of the pointed Carath\'eodory convergence of domains $(\C \sm P_j, z_0) \to (G,z_0)$ for any choice $z_0 \in G$; see, e.g., \cite[Proposition 4.4]{McM}.
\end{proof}

The following lemma is a corollary to the lemma above.

\begin{lemma}[Cost of finitely many punctures]
\label{lem:punctures}
Let $G\subset\C$ be a hyperbolic domain, and let $E\subset G$ be a finite set. Then
\begin{equation}\label{eq:puncture-cost}
0\leq
\int_{G\setminus E}
\left(\rho_{G\setminus E}^2-\rho_G^2\right)\dxdy
\leq\frac{\pi}{2}\#|E|.
\end{equation}
\end{lemma}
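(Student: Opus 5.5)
The lower bound is immediate from Schwarz--Pick: since $G\setminus E\subset G$, we have $\rho_{G\setminus E}\ge\rho_G$ on $G\setminus E$, so the integrand is nonnegative. The upper bound comes from reducing to finitely punctured planes, where Gauss--Bonnet gives exact areas via \eqref{eq:finite-area}, and then passing to the limit with Lemma~\ref{lem:recovery}.

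Set $K=\C\setminus G$, a closed set. If $K$ consists of a single point $a$ (so $G\setminus E$ is again a punctured plane), there is no need to exhaust. Otherwise, I choose increasing finite sets $Q_j\subset K$ with $\#|Q_1|\ge2$ and $\overline{\bigcup_j Q_j}=K$. This is possible because $K$ is closed in $\C$ and separable; I take a countable dense subset, and if $K$ is finite I take $Q_j=K$. In the one-point case I use $Q_j=\{a\}$ and check the formula directly. Set $P_j=Q_j$ and $P_j'=Q_j\cup E$. Both are finite sets with at least two points, and they exhaust the closed sets $K$ and $K\cup E$ respectively. By \eqref{eq:finite-area},
\[
\A_{\rho_{\C\setminus P_j'}}(\C\setminus P_j')-\A_{\rho_{\C\setminus P_j}}(\C\setminus P_j)=\tfrac{\pi}{2}\#|E|,
\]
since $E\cap K=\emptyset$. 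The same identity holds in the one-point case with $P=\{a\}\cup E$ against an empty reference. Equivalently, the area of $\C\setminus P$ for $\#|P|=1$ is formally $0$, which is consistent with $\chi$ but degenerate, so there I simply apply the limit argument below with $Q_j=\{a,b_j\}$, choosing $b_j\to\infty$.

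Restrict both metrics to $G\setminus E$. Since $\rho_{\C\setminus P_j'}\ge\rho_{\C\setminus P_j}$ pointwise, the difference of total areas dominates
\[
\int_{G\setminus E}\bigl(\rho^2_{\C\setminus P'_j}-\rho^2_{\C\setminus P_j}\bigr)\dxdy ,
\]
because the remaining region $\C\setminus(G\cup P_j')$ also contributes a nonnegative difference. Thus this integral is at most $\frac{\pi}{2}\#|E|$ for every $j$. By Lemma~\ref{lem:recovery}, applied to the component $G\setminus E$ of $\C\setminus(K\cup E)$ and to the component $G$ of $\C\setminus K$, the densities converge monotonically and locally uniformly: $\rho_{\C\setminus P_j'}\uparrow\rho_{G\setminus E}$ and $\rho_{\C\setminus P_j}\uparrow\rho_G$.

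The main obstacle is passing to the limit in a difference of two possibly infinite integrals, since the hyperbolic area of $G$ may be infinite. The plan is to take the limit on compact subsets $C\Subset G\setminus E$, where convergence is uniform and the integrals are finite. This gives $\int_C(\rho_{G\setminus E}^2-\rho_G^2)\le\frac{\pi}{2}\#|E|$ for every such $C$. Because the integrand is nonnegative, exhausting $G\setminus E$ by compacts and applying monotone convergence yields \eqref{eq:puncture-cost}. In the degenerate one-point case I instead use $Q_j=\{a,b_j\}$ with $b_j\to\infty$; these sets are not increasing, so I would invoke Carathéodory convergence directly, as in the proof of Lemma~\ref{lem:recovery}.
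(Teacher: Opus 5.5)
Your proposal is correct and follows the same route as the paper. You exhaust $\C\setminus G$ by finite sets, use \eqref{eq:finite-area} to get the exact area difference $\frac{\pi}{2}\#|E|$, restrict the nonnegative integrand to $G\setminus E$, and pass to the limit via Lemma~\ref{lem:recovery}; your compact exhaustion plus monotone convergence is just an unpacked form of the paper's appeal to Fatou's lemma. The one-point case is superfluous and can be deleted, together with the non-increasing sets $\{a,b_j\}$: since $G\subset\C$ is hyperbolic, $\C\setminus G$ automatically contains at least two points, so that case never arises.
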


\begin{proof}
Choose increasing finite sets $P_j\subset\C\setminus G$, with $\#|P_1| \ge 2$, that exhaust $\C\setminus G$, i.e.,
\[
\ovl{\bigcup_{j \ge 1} P_j} = \C \sm G.
\]
By \eqref{eq:finite-area},
\begin{equation*}
\begin{aligned}
&\A_{\rho_{\C \sm P_j}}(\C \sm P_j) = \frac{\pi}{2}(\#|P_j|-1), \\
&\A_{\rho_{\C \sm (P_j \cup E)}}(\C \sm (P_j \cup E)) = \frac{\pi}{2}(\#|P_j \cup E|-1) = \frac{\pi}{2}(\#|P_j| + \#|E|-1),   
\end{aligned}
\end{equation*}
where the last equality follows since the sets $P_j$ and $E$ are disjoint. Hence
\[
\int_{\C \sm (P_j \cup E)} \rho_{\C \sm (P_j \cup E)}^2\dxdy - \int_{\C \sm P_j} \rho_{\C \sm P_j}^2\dxdy  = \frac{\pi}{2} \#|E|.
\]
Furthermore, since $E$ is finite (and hence has measure zero), we obtain
\[
\int_{\C \sm P_j} \rho_{\C \sm P_j}^2\dxdy = \int_{\C \sm (P_j \cup E)} \rho_{\C \sm P_j}^2\dxdy.
\]
Combining the last two estimates, we get
\[
\int_{\C \sm (P_j \cup E)} \left(\rho_{\C \sm (P_j \cup E)}^2 - \rho_{\C \sm P_j}^2\right)\dxdy = \frac{\pi}{2} \#|E|.
\]
The integrand is nonnegative by monotonicity. Lemma~\ref{lem:recovery} gives the convergence of $\rho_{\C\sm(P_j \cup E)}$ and $\rho_{\C\sm P_j}$ to the corresponding intrinsic hyperbolic densities $\rho_{G \sm E}$ and $\rho_G$ on compacts in $G \sm E$. Restricting the integrals to $G\setminus E$ and applying Fatou's lemma proves \eqref{eq:puncture-cost}.
\end{proof}

\begin{lemma}[Comparison of densities with the same punctures]
\label{lem:localization}
Fix a finite set $Q\subset\C$, with $\#|Q|\geq2$, an open Euclidean disk $D \subset \C$, and a compact set $K\subset D$. Then there exists a constant $C=C(Q,D,K)<\infty$ such that, for every finite set $P\supset Q$,
\[
\int_{K\setminus P}
\left(\rho_{D\setminus P}^2-\rho_{\C\setminus P}^2\right)\dxdy
\leq C.
\]
\end{lemma}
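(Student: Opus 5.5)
The plan is to bound the integrand pointwise by a localization estimate in hyperbolic geometry, and then to control the resulting weighted area uniformly in $P$.

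\emph{Step 1 (pointwise localization).} Let $\Omega=\C\setminus P$ and $\Omega'=D\setminus P\subset\Omega$, and fix $z\in K\setminus P$. Put
\[
s(z)=d_{\Omega}(z,\C\setminus D).
\]
Take a universal covering $\pi\colon\D\to\Omega$ with $\pi(0)=z$. The component of $\pi^{-1}(\Omega')$ containing $0$ contains the hyperbolic ball $B(0,s(z))$, which is the Euclidean disk of radius $\tanh s(z)$ (curvature $-4$). The map $\pi$ restricted to that component is a covering of $\Omega'$. So by Schwarz--Pick applied to the inclusion of the ball,
\[
\rho_{\Omega'}(z)\le \frac{\rho_{B(0,\tanh s)}(0)}{|\pi'(0)|}=\frac{\rho_\Omega(z)}{\tanh s(z)} .
\]
This gives the pointwise bound
\[
\rho_{D\setminus P}^2-\rho_{\C\setminus P}^2\le \frac{\rho_{\C\setminus P}^2}{\sinh^2 s(z)} .
\]

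\emph{Step 2 (uniform lower bound on $s$).} Since $P\supset Q$, we have $\rho_{\C\setminus P}\ge\rho_{\C\setminus Q}$, hence $s(z)\ge s_Q(z):=d_{\C\setminus Q}(z,\C\setminus D)$. The function $s_Q$ is continuous and positive on $K\setminus Q$ and tends to $\infty$ at the points of $Q\cap K$. Hence $s_Q\ge s_0>0$ on $K$, with $s_0$ depending only on $Q,D,K$. The factor $1/\sinh^2 s$ is therefore uniformly bounded, and it decays like $4e^{-2s}$ where $s$ is large.

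\emph{Step 3 (the main obstacle).} Step 2 alone does not suffice, because $\int_K\rho_{\C\setminus P}^2$ grows like $\#|P|$. The decay of $e^{-2s}$ has to compensate. Points of $P$ near $z$ push $z$ hyperbolically far from $\C\setminus D$; near an isolated puncture $p$ one has $\rho\sim 1/(2|z-p|\log(1/|z-p|))$ and $e^{-2s}\lesssim 1/\log^2(1/|z-p|)$, so each puncture contributes an integrable amount. However, a uniform bound must not depend on the number of punctures.

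My first attempt would be a coarea/collar argument. Decompose $K$ into the layers $\{k\le s<k+1\}$. Bound the $\rho_{\C\setminus P}$-area of each layer by $C e^{2k}$ times the hyperbolic length of a fixed intermediate circle $\partial D_1$ with $K\subset D_1\Subset D$, using curvature $-4$ area growth of tubular neighbourhoods. Then sum against $e^{-2k}$. The difficulty is that the length of $\partial D_1$ in $\C\setminus P$ is not uniform in $P$ if points of $P$ crowd near it. To handle this, I would choose $D_1$ among a one-parameter family of radii by an averaging (coarea) argument, so that the average length is controlled by $\int_{D\setminus K}\rho_{\C\setminus P}$ near the annulus.

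As a fallback, I would try to prove that the difference $\rho_{D\setminus P}^2-\rho_{\C\setminus P}^2$ is pointwise nonincreasing as $P$ grows. The heuristic is that extra punctures make both metrics more local, so the effect of the boundary $\partial D$ weakens. If this monotonicity holds, the integral is bounded by its value for $P=Q$. That value is finite by Step 1, since near $Q\cap K$ the bound $\rho^2/\sinh^2 s$ is integrable by the puncture asymptotics above, and on the rest of $K$ both densities are bounded.
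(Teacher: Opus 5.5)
Your Steps 1 and 2 are correct. They are exactly the estimate the paper relies on: they give $0\le u_P:=\log(\rho_{D\setminus P}/\rho_{\C\setminus P})\le\log\coth s_0$ on compact subsets of $D$, uniformly in $P\supset Q$. The gap is Step 3. It is only a plan, and neither branch of it can succeed.

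First, the Step 1 majorant $\rho_{\C\setminus P}^2/\sinh^2 s_P$ does \emph{not} have integral over $K$ bounded uniformly in $P$. Take, say, $Q$ outside $\overline D$, and let $P$ consist of $Q$ together with an $h$-lattice filling a small disk $\overline{D(c,\epsilon)}\subset K$. Consider a point at distance $t\in[Ah,\epsilon/2]$ from that disk. Two lattice points whose mutual distance and distance to the point are comparable to $t$ force $\rho_{\C\setminus P}\gtrsim 1/t$. A radial path, compared with a disk of radius $\epsilon$ tangent to the cluster, gives $s_P\le\frac12\log(\epsilon/t)+O(1)$. Hence the majorant is $\gtrsim 1/(t\epsilon)$, and its integral over this collar is $\gtrsim\log(\epsilon/h)\to\infty$ as $h\to0$. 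So any argument that first bounds the integrand by this majorant fails. Your layer bookkeeping already shows this: $Ce^{2k}$ times the length of $\partial D_1$, weighted by $e^{-2k}$, contributes a fixed amount per layer, and the number of layers is unbounded (infinite as soon as $P\cap K\neq\emptyset$).

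Second, the fallback monotonicity is false. Add a point $p\in D\setminus Q$ and write $r=|z-p|$. The cusp expansion is $\rho_G(z)=\frac{1+O(r)}{2r(\log(1/r)+c_G+O(r))}$, with $c_{D\setminus P}<c_{\C\setminus P}$ by the Schwarz lemma at the cusp. It makes the difference behave like $(c_{\C\setminus P}-c_{D\setminus P})/(2r^2\log^3(1/r))$, which is unbounded near $p$. For $P=Q$ the difference is bounded there.

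The missing idea is to apply the curvature equation $\Delta\log\rho_G=4\rho_G^2$ to the \emph{ratio}. On $D\setminus P$,
\[
\Delta u_P=4\bigl(\rho_{D\setminus P}^2-\rho_{\C\setminus P}^2\bigr)\ge0 ,
\]
so the integrand is the Laplacian of a nonnegative subharmonic function. Fix $\phi\in C_c^\infty(D)$ with $0\le\phi\le1$ and $\phi\equiv1$ near $K$. Your own Steps 1--2 bound $u_P$ by a constant $M$ on $\supp\phi$, independent of $P$. Being bounded, $u_P$ extends subharmonically across $P$. Its Riesz measure $\mu_P\ge0$ equals $4(\rho_{D\setminus P}^2-\rho_{\C\setminus P}^2)\dxdy$ on $D\setminus P$, and any extra mass on $P$ only helps. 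Moving both derivatives onto the fixed cutoff gives
\[
4\int_{K\setminus P}\bigl(\rho_{D\setminus P}^2-\rho_{\C\setminus P}^2\bigr)\dxdy\le\int_D\phi\,\mathrm d\mu_P=\int_D u_P\,\Delta\phi\dxdy\le M\int_D|\Delta\phi|\dxdy ,
\]
which is independent of $P$. The paper makes this rigorous by mollifying $u_P$ and applying Fatou's lemma. In short, the uniform sup bound on $u_P$ that you already proved is enough, but only when used through Green's formula rather than pointwise.
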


Note that we do not assume that $K$ and $P$ are disjoint. The integrand in Lemma~\ref {lem:localization} measures pointwise the increase in the squared hyperbolic density caused by restricting the ambient domain from $\mathbb C\setminus P$ to $D\setminus P$. Thus, the integral over $K$ measures the total accumulated increase of hyperbolic area density on $K$; the lemma states that this excess is uniformly bounded, independently of the additional punctures in $P\supset Q$.

\begin{proof}[Proof of Lemma~\ref{lem:localization}]
Set $X_P :=\C\setminus P$ and define the function
\[
u_P :=\log\frac{\rho_{D\setminus P}}{\rho_{X_P}}
\quad\text{on }D\setminus P.
\]
Domain monotonicity and \eqref{eq:curvature} give
\[
u_P\geq0,
\qquad
\Delta u_P=4\left(\rho_{D\setminus P}^2-\rho_{X_P}^2\right)\geq0,
\]
so $u_P$ is a non-negative subharmonic function on $D \sm P$.

Fix a compact set $T \subset D$. Choose a closed circular annulus inside $D$, separating $T$ from $\partial D$, which avoids the finite set $Q$. The positive continuous function $\rho_{\C\setminus Q}$ has a positive minimum on this annulus. Every path from $T\setminus P$ to $X_P\setminus D$ crosses the annulus. Since $\rho_{X_P} = \rho_{\C \sm P} \geq\rho_{\C\setminus Q}$, there is $\varepsilon_T>0$, independent of $P$, such that
\begin{equation}\label{eq:distance-bound}
d_{X_P}(z,X_P\setminus D)\geq\varepsilon_T
\qquad \forall z\in T\setminus P,
\end{equation}
where $d_{X_P}$ is the hyperbolic distance in $X_P = \C \sm P$.

Let $\pi\colon\D\to X_P$ be a universal cover with $\pi(0)=z\in T\setminus P$, and put $a_T :=\tanh\varepsilon_T$; note that we can choose $a_T < 1$. By covering invariance, every radial segment in $a_T\D$ has image of hyperbolic length less than $\varepsilon_T$. By~\eqref{eq:distance-bound},
\[
\pi(a_T\D)\subset D\setminus P.
\]
By the Schwarz--Pick Lemma applied to the mapping $\pi \colon a_T \mathbb D \to D \sm P$ at $0$, we obtain
\[
\rho_{D \sm P}(z) |\pi'(0)| \le \rho_{a_T \mathbb D}(0) = \frac{\rho_{\mathbb D}(0)}{a_T} = \coth \varepsilon_T.
\]
Using the same idea for the starting covering $\pi\colon\D\to X_P$, we get $\rho_{X_P}(z)|\pi'(0)|=1$. Therefore,
\begin{equation}\label{eq:uniform-log-bound}
0\leq u_P(z)\leq\log\coth\varepsilon_T \qquad \forall z\in T\setminus P.
\end{equation}
Thus $u_P$ is locally bounded near each puncture $P\cap D$. A subharmonic function locally bounded above near an isolated puncture extends subharmonically across that puncture. Thus $u_P$ extends to a nonnegative subharmonic function on $D$. By \eqref{eq:uniform-log-bound}, these extensions are uniformly bounded on each compact subset of $D$.

Since $u_P$ extends to a non-negative subharmonic function on $D$, we may mollify it in the usual way. Let $\eta\in C_c^\infty(\mathbb C)$ be a non-negative standard mollifier, supported in the unit disk and normalized by $\int_{\mathbb C}\eta\,\dxdy=1$. For $\delta>0$, let
\[
u_{P,\delta}(z)
:=
\int_{\mathbb C}u_P(z-\zeta)\eta_\delta(\zeta)\,\dxdy(\zeta),
\]
where $\eta_\delta(\zeta)=\delta^{-2}\eta(\zeta/\delta)$ is a re-scaled mollifier. In this way, $u_{P,\delta}$ is well defined and smooth on
\[
D_\delta:=\{z\in D:\dist(z,\partial D)>\delta\}
\]
(where $\dist$ stands for the standard Euclidean distance). Moreover, since $u_P$ is subharmonic,
\[
\Delta u_{P,\delta}
=
(\Delta u_P)*\eta_\delta\ge 0.
\]

Choose $\phi\in C_c^\infty(D)$ such that $0\le \phi\le 1$ and $\phi\equiv 1$ on a neighbourhood of $K$. Taking $\delta>0$ sufficiently small, we may assume that $\supp\phi\subset D_\delta$. Since $u_P$ is smooth
on $D\setminus P$, we have
\[
\Delta u_{P,\delta}
\longrightarrow
\Delta u_P
=
4\left(\rho_{D\setminus P}^2-\rho_{\mathbb C\setminus P}^2\right)
\]
locally uniformly on $D\setminus P$. As $\Delta u_{P,\delta}\ge0$, Fatou's
lemma gives
\[
4\int_{K\setminus P}
\left(\rho_{D\setminus P}^2-\rho_{\mathbb C\setminus P}^2\right)\,\dxdy
\le
\liminf_{\delta\to0}
\int_D \phi\,\Delta u_{P,\delta}\,\dxdy .
\]

Since $\phi$ has compact support in $D$, integration by parts yields
\[
\int_D \phi\,\Delta u_{P,\delta}\,\dxdy = \int_D u_{P,\delta}\,\Delta\phi\,\dxdy .
\]
By the uniform bound obtained above, there exists a constant $M>0$,
depending only on $Q$, $D$, and $\supp\phi$ (and hence on $K$), such that $0\le u_P\le M$ on a neighbourhood of $\supp\phi$, uniformly in $P\supset Q$. Hence, for
all sufficiently small $\delta$, we have the same bound $0\le u_{P,\delta}\le M$ on $\supp\phi$. Therefore,
\[
\int_D u_{P,\delta}\,\Delta\phi\,\dxdy
\le
M\int_D |\Delta\phi|\,\dxdy .
\]
Consequently,
\[
\int_{K\setminus P}
\left(\rho_{D\setminus P}^2-\rho_{\mathbb C\setminus P}^2\right)\,\dxdy
\le
\frac{M}{4}\int_D |\Delta\phi|\,\dxdy .
\]
The right-hand side is independent of $P$, which proves the required
uniform bound.
\end{proof}

\subsection{Comparison of hyperbolic metrics under pullbacks}
\label{SSec:HyperMap}

In this subsection, we apply the estimates of Subsection~\ref{SSec:HyperDom} to the dynamical setting, i.e., when the hyperbolic metrics on certain domains are related by the dynamics of a meromorphic (or an entire) map. 

In what follows, we denote by $\Pol(f)$ the set of poles of a transcendental meromorphic map $f \colon \C \to \Cc$. If $f$ is entire, then $\Pol(f) = \emptyset$.

The first preliminary lemma is trivial, and we provide its proof for completeness. 

\begin{lemma}[Local charts]
\label{lem:local-charts}
Let $f\colon\C\to\Chat$ be a nonconstant meromorphic map. Every point
$a\in\C\setminus\Pol(f)$ has arbitrarily small neighborhoods
$A\Subset\C\setminus\Pol(f)$ such that
\[
 f\colon A\longrightarrow D
\]
is a proper map of finite degree, where $D$ is a Euclidean disk. The restriction
has no critical value or has the single critical value $f(a)$. Consequently, every compact set $K\subset\C\setminus\Pol(f)$ admits finitely many such charts $A_\ell$ and compact sets $K_\ell\Subset A_\ell$ such that $\bigcup K_\ell \supset K$.
\end{lemma}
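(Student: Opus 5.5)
The plan is to work with a local normal form at $a$. Fix $a\in\C\setminus\Pol(f)$ and put $w_0=f(a)\in\C$. Since $f$ is holomorphic near $a$ and nonconstant, zeros of $f'$ are isolated and $a$ is an isolated point of $f^{-1}(w_0)$. So there is $r_0>0$ such that:
\begin{itemize}
\item[(i)] the closed disk $\overline{B(a,r_0)}$ avoids the discrete set $\Pol(f)$;
\item[(ii)] $f'\neq0$ on $\overline{B(a,r_0)}\setminus\{a\}$;
\item[(iii)] $f\neq w_0$ on $\overline{B(a,r_0)}\setminus\{a\}$.
\end{itemize}
Let $m\ge1$ be the local degree of $f$ at $a$, that is, the order of the zero of $f-w_0$ at $a$.

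Given $0<r\le r_0$, compactness and (iii) give $\delta:=\min_{|z-a|=r}|f(z)-w_0|>0$. Set $D:=B(w_0,\delta)$ and let $A$ be the connected component of $f^{-1}(D)\cap B(a,r)$ containing $a$. The key point is that $A\Subset B(a,r)$: any boundary point of $A$ inside $\overline{B(a,r)}$ on the circle $|z-a|=r$ would satisfy $|f(z)-w_0|\ge\delta$, so it cannot be a limit of points of $f^{-1}(D)$. Hence $f|_A\colon A\to D$ is proper, because preimages of compact subsets of $D$ stay away from $\partial A$, where $|f-w_0|=\delta$.

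A proper holomorphic map between domains has a finite constant degree. Counting preimages of $w_0$ gives this degree as $m$, since by (iii) the only preimage is $a$, with multiplicity $m$. By (ii), the only possible critical point in $A$ is $a$, so the only possible critical value is $f(a)$, and this occurs exactly when $m\ge2$. Since $r$ was arbitrary, these neighbourhoods $A\subset B(a,r)$ can be taken arbitrarily small, and $A\Subset\C\setminus\Pol(f)$ by (i). The only mildly delicate step is the properness, namely ruling out that $A$ touches the circle $|z-a|=r$, and the choice of $\delta$ settles it. If a simply connected $A$ is desired, one can instead use the Böttcher-type coordinate $f(z)-w_0=\phi(z)^m$ with $\phi$ univalent, and take $A=\phi^{-1}(B(0,\delta^{1/m}))$.

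For the compact statement, take $K\subset\C\setminus\Pol(f)$. For each $a\in K$ choose a chart $A_a$ as above and a compact neighbourhood $K_a\Subset A_a$ of $a$, for example a small closed disk about $a$. The interiors of the $K_a$ cover $K$, so compactness yields finitely many $a_\ell$ with $\bigcup_\ell K_{a_\ell}\supset K$. This gives the charts $A_\ell=A_{a_\ell}$ and compact sets $K_\ell=K_{a_\ell}$ required in the statement of Lemma~\ref{lem:local-charts}.
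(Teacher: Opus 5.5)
Your argument is essentially correct and takes a different route from the paper. However, one sentence in it is false and should be replaced.

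The claim $A\Subset B(a,r)$ fails in general. Equivalently, it is not true that the choice of $\delta$ prevents $\overline A$ from touching the circle $|z-a|=r$. For $f(z)=z$ and $a=0$ you get $\delta=r$, $D=B(0,r)$ and $A=B(0,r)$. Your justification breaks because a point of the circle with $|f(z)-w_0|\ge\delta$ can still be a limit of points where $|f-w_0|<\delta$; only a strict inequality would exclude that.

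The claim is not needed, though. Take $z\in\partial A$. Then $|f(z)-w_0|\le\delta$ by continuity. The reverse inequality $|f(z)-w_0|\ge\delta$ holds by the definition of $\delta$ when $|z-a|=r$, and because $A$ is a component of the open set $f^{-1}(D)\cap B(a,r)$ when $|z-a|<r$. Hence $|f-w_0|=\delta$ on all of $\partial A$, which is exactly what your properness sentence uses. Also, $\overline A\subset\overline{B(a,r)}$ still gives $A\Subset\C\setminus\Pol(f)$. The degree count via the preimages of $w_0$, the location of the critical points, and the finite covering of $K$ are fine.

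The paper instead works directly in the local normal form $f=f(a)+\zeta^m$, with $\zeta$ a conformal coordinate at $a$. It takes $A=\zeta^{-1}(D(0,r))$ and $D=D(f(a),r^m)$, so that $f\colon A\to D$ is conjugate to $w\mapsto w^m$. Properness, degree $m$, the single critical value, and the fact that $A$ is a topological disk are then all immediate. This is the alternative you mention at the end with the coordinate $\phi$.

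Your construction avoids the normal form. It relies only on isolated zeros, a minimum-modulus choice of the target radius, and the constant degree of proper maps. The price is the boundary argument above. Getting simple connectivity of $A$, which the lemma does not require, would need an additional Riemann--Hurwitz count.
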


\begin{proof}
Fix $a\in\C\setminus\Pol(f)$. Since $f(a)\in\C$, the local normal form for a nonconstant holomorphic map gives a neighborhood $U$ of $a$, a conformal coordinate $\zeta\colon U\to \zeta(U)$ with $\zeta(a)=0$, and an integer $m\ge 1$ such that
\[
f(z)=f(a)+\zeta(z)^m,\qquad z\in U.
\]
Choosing $r>0$ arbitrarily small with $\overline{D(0,r)}\subset\zeta(U)$, set
\[
A:=\zeta^{-1}(D(0,r)), \qquad D:=D(f(a),r^m).
\]
Then $A\Subset\C\setminus\Pol(f)$, and, in the coordinate $\zeta$, the restriction $f\colon A\to D$ is conjugate to
\[
w\longmapsto w^m\colon D(0,r)\to D(0,r^m).
\]
Hence it is a proper map of degree $m$. If $m=1$, it has no critical points; if $m\ge2$, its unique critical point is $a$, and its unique critical value is $f(a)$. Since $r$ may be chosen arbitrarily small, such charts form a neighborhood basis at $a$.

For the final assertion, choose for each $a\in K$ such a chart $A_a$ and an open neighborhood $U_a$ of $a$ with ${U_a}\Subset A_a$. By compactness, finitely many $U_{a_\ell}$ cover $K$. Setting
\[
A_\ell:=A_{a_\ell},
\qquad
K_\ell:=\overline{U_{a_\ell}},
\]
gives $K_\ell\subset A_\ell$ and $\bigcup_\ell K_\ell\supset K$.
\end{proof}

For $s\in\mathbb R$, write $[s]_+=\max\{s,0\}$ for the positive part of $s$. The following lemma is the main claim in this subsection.

\begin{lemma}[Uniform local metric deficit under pullbacks]
\label{lem:deficit}
Let $f\colon\C\to\Cc$ be a nonconstant meromorphic function. Fix a finite set $Q\subset\C$, with $\#|Q|\geq2$, and a compact set $K\subset\C\sm\Pol(f)$. Then there exists a positive constant $C = C(f, Q, K)<\infty$ such that, for every finite, forward-invariant set $P\supset Q$, it follows that
\begin{equation}\label{eq:deficit}
\int_K\left[\rho_{\C \sm P}^2-(f^*\rho_{\C \sm P})^2\right]_+\dxdy\leq C.
\end{equation}
The discrete exceptional sets on which the displayed densities are undefined are omitted from the integral.
\end{lemma}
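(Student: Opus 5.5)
By Lemma~\ref{lem:local-charts}, I will cover $K$ by finitely many compact sets $K_\ell\Subset A_\ell$, where each $f\colon A_\ell\to D_\ell$ is a proper map of finite degree $m_\ell$ onto a Euclidean disk $D_\ell$, with at most the single critical value $f(a_\ell)$. The integral is subadditive over this cover, so it suffices to bound the integral over one $K_\ell$ by a constant independent of $P$, and then sum.

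Fix one chart $f\colon A\to D$ with compact $K'\Subset A$. The plan is to insert $\rho_{D\setminus P}$ as an intermediate density and use two comparisons. First, forward invariance gives $f(A\setminus f^{-1}(P))\subset D\setminus P$ and $f^{-1}(P)\supset P$. Hence on $A\setminus f^{-1}(P)$ the map $f$ sends $A\setminus f^{-1}(P)$ holomorphically into $D\setminus P$, and it is in fact a covering away from the critical value. The pointwise decomposition is
\[
\rho_{\C\sm P}^2-(f^*\rho_{\C\sm P})^2
=\bigl(\rho_{\C\sm P}^2-(f^*\rho_{D\sm P})^2\bigr)+f^*\bigl(\rho_{D\sm P}^2-\rho_{\C\sm P}^2\bigr),
\]
where the last term means $|f'|^2$ times the difference evaluated at $f(z)$. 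The second term is nonnegative. By the multiplicity formula,
\[
\int_{K'}|f'|^2(\rho_{D\sm P}^2-\rho_{\C\sm P}^2)\circ f\,\dxdy\le m\int_{f(K')\sm P}(\rho_{D\sm P}^2-\rho_{\C\sm P}^2)\dxdy.
\]
Here $f(K')$ is a compact subset of $D$, so Lemma~\ref{lem:localization}, applied with the disk $D$ and the compact set $f(K')$, bounds this by $mC(Q,D,f(K'))$.

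For the first term, I plan to show that $\rho_{\C\sm P}\le f^*\rho_{D\sm P}$ pointwise on $A\setminus(f^{-1}(P)\cup\{a\})$, so that the positive part of the first term vanishes. Since $[x+y]_+\le[x]_++y$ for $y\ge0$, the two bounds together give the lemma. To prove the pointwise inequality, enlarge $P$ by the critical value $f(a)$: set $P'=P\cup\{f(a)\}$. Then $f\colon A\setminus f^{-1}(P')\to D\setminus P'$ is an unbranched finite covering, so $f^*\rho_{D\sm P'}=\rho_{A\sm f^{-1}(P')}\ge\rho_{\C\sm f^{-1}(P')}$. The added point changes $P$ by only one puncture, but it does not directly give the inequality without $P'$.

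I expect this to be the main obstacle: the critical value $f(a)$ need not lie in $P$, and $\rho_{\C\sm P}$ is not comparable pointwise to $\rho_{\C\sm P'}$. What I would try first is to keep the single extra puncture throughout the argument and pay for it in area. Concretely, I would replace $\rho_{D\sm P}$ by $\rho_{D\sm P'}$ in the decomposition above. The first term then satisfies $\rho_{\C\sm P}\le\rho_{\C\sm f^{-1}(P')}\le f^*\rho_{D\sm P'}$, because $f^{-1}(P')\supset f^{-1}(P)\supset P$, so its positive part vanishes. The second term becomes an integral over $f(K')$ of $\rho_{D\sm P'}^2-\rho_{\C\sm P}^2$. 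I split this as $(\rho_{D\sm P'}^2-\rho_{\C\sm P'}^2)+(\rho_{\C\sm P'}^2-\rho_{\C\sm P}^2)$. Lemma~\ref{lem:localization} bounds the first piece, using that $P'\supset Q$ and that $P'$ is finite (forward invariance is not needed there). Lemma~\ref{lem:punctures}, with $G=\C\sm P$ and $E=\{f(a)\}$, bounds the second piece by $\pi/2$. Multiplying by $m$ and summing over the charts gives the constant $C(f,Q,K)$.
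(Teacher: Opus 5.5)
Your final argument is correct and is essentially the paper's proof. The paper likewise punctures the target disk at the critical value set $V_\ell$ (your $P'=P\cup\{f(a)\}$), uses forward invariance and covering invariance to get $\rho_{\C\setminus P}\le f^*\rho_{D_\ell\setminus(P\cup V_\ell)}$, and then bounds $\rho_{D_\ell\setminus(P\cup V_\ell)}^2-\rho_{\C\setminus P}^2$ on $f(K_\ell)$ via Lemma~\ref{lem:localization} and Lemma~\ref{lem:punctures}, multiplied by the chart degree. The only difference is the intermediate density: the paper splits through $\rho_{D_\ell\setminus P}$ (localization with $P$, then the puncture cost inside $D_\ell\setminus P$), whereas you split through $\rho_{\C\setminus P'}$ (localization with the non-invariant set $P'$, which that lemma permits, then the puncture cost in $\C\setminus P$), and both work. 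You were also right to abandon your first plan: $f^*\rho_{D\setminus P}$ vanishes at a critical point, so $\rho_{\C\setminus P}\le f^*\rho_{D\setminus P}$ fails near $a$ when $f(a)\notin P$.
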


\begin{proof}
Choose finitely many proper charts
\[
f\colon A_\ell\to D_\ell
\]
and compact sets $K_\ell\Subset A_\ell$ covering $K$, as in
Lemma~\ref{lem:local-charts}. Let $V_\ell\subset D_\ell$ denote the set of
critical values of $f|_{A_\ell}$, and put
\[
Z_{\ell,P}:=D_\ell\setminus(P\cup V_\ell).
\]
Then
\[
f\colon A_\ell\setminus f^{-1}(P\cup V_\ell)\longrightarrow Z_{\ell,P}
\]
is a covering. Since $f(P)\subset P$, we have
\[
A_\ell\setminus f^{-1}(P\cup V_\ell)\subset\C\setminus P.
\]
Hence, by domain monotonicity and covering invariance of the hyperbolic
metric,
\[
\rho_{\C\setminus P}
\le
\rho_{A_\ell\setminus f^{-1}(P\cup V_\ell)}
=
f^*\rho_{Z_{\ell,P}}.
\]
Thus, we have
\[
\rho_{\C\setminus P}(z)^2 \le |f'(z)|^2\rho_{Z_{\ell,P}}(f(z))^2 \qquad \text{ for every } z\in K_\ell\setminus f^{-1}(P\cup V_\ell).
\]
Since
\[
\rho_{Z_{\ell,P}}\ge \rho_{\C\setminus P},
\]
it follows that, almost everywhere on $K_\ell$,
\begin{equation}\label{eq:local-deficit-bound}
\left[\rho_{\C\setminus P}^2-(f^*\rho_{\C\setminus P})^2\right]_+
\le
|f'|^2
\left(\rho_{Z_{\ell,P}}^2-\rho_{\C\setminus P}^2\right)\circ f .
\end{equation}

We decompose the difference on the target as
\[
\rho_{Z_{\ell,P}}^2-\rho_{\C\setminus P}^2
=
\left(\rho_{D_\ell\setminus P}^2-\rho_{\C\setminus P}^2\right)
+
\left(\rho_{Z_{\ell,P}}^2-\rho_{D_\ell\setminus P}^2\right).
\]
Both terms are nonnegative. By Lemma~\ref{lem:localization}, the integral
of the first over $f(K_\ell)\Subset D_\ell$ is bounded by a constant
$C_\ell$ independent of $P$. By Lemma~\ref{lem:punctures}, applied in
$D_\ell\setminus P$ to the finite set $V_\ell\setminus P$, the second has
total integral at most
\[
\frac{\pi}{2}\#|V_\ell\setminus P|
\le \frac{\pi}{2}\#|V_\ell|.
\]

Let $d_\ell:=\deg(f\colon A_\ell\to D_\ell)$. Applying the area formula to \eqref{eq:local-deficit-bound}, and using that the multiplicity of $f|_{K_\ell}$ is at most $d_\ell$, gives
\[
\int_{K_\ell}
\left[\rho_{\C\setminus P}^2-(f^*\rho_{\C\setminus P})^2\right]_+\dxdy
\le
d_\ell\left(C_\ell+\frac{\pi}{2}\#|V_\ell|\right).
\]
Summing over the finitely many sets $K_\ell$ yields \eqref{eq:deficit},
with a constant independent of $P$.
\end{proof}

\subsection{Dynamical finite-puncture approximation}
\label{SSec:DA}

In this subsection, we explain the main dynamical approximation by domains in the complement of an increasing and exhausting set of points. The same approximation is used in \cite{Ye}.

Let $f \colon \C \to \Cc$ be a transcendental meromorphic map. By \cite[Theorem~4]{Bergweiler}, there are increasing finite unions $P_j$ of repelling periodic orbits of $f$ such that
\[
\#|P_1|\geq2,\qquad f(P_j)=P_j,\qquad
\overline{\bigcup_{j\geq1}P_j}=J(f).
\]
We fix one such sequence and put
\begin{equation}\label{eq:rho-j}
\rho_j :=\rho_{\C\setminus P_j}.
\end{equation}
Then by \eqref{eq:finite-area}, 
\begin{equation}
\label{eq:finite-area-j}
\A_{\rho_j}(\C\setminus P_j)=\frac\pi2(\#|P_j|-1)<\infty,
\end{equation}
and Lemma~\ref{lem:recovery} gives, on every Fatou component $V$ of $f$, we have the locally uniform monotone convergence
\begin{equation}\label{eq:fatou-recovery}
\rho_j|_V \uparrow \rho_V.
\end{equation}
These statements do not require $V$ to be bounded or simply connected.

\section{Proof of Main Theorem~\ref{thm:mainA}}
\label{Sec:ThmA}

In this section, $f \colon \C \to \C$ is a transcendental entire function. The following lemma is well-known.

\begin{lemma}[Images of bounded components]\label{lem:proper-component}
Let $U$ be a bounded Fatou component, and let $V$ be the Fatou component containing $f(U)$. Then $V$ is bounded and
\[
f\colon U\longrightarrow V
\]
is proper, surjective, and of finite degree. In particular, $f(U)=V$. \qed
\end{lemma}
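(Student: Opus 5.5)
The plan is to use only that $f$ is entire and that $U$ is a bounded component of the open set $F(f)$. We show in turn that $V$ is bounded, that $f|_U$ is proper onto $V$, and that it is surjective of finite degree.

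\emph{Boundedness of $V$.} Since $U$ is bounded, $\ovl U$ is compact and $f(\ovl U)$ is compact. Complete invariance gives $f(\partial U)\subset J(f)$, because $\partial U\subset J(f)$. I would argue $\partial V\subset f(\partial U)$, which makes $\partial V$ compact. Suppose instead $V$ were unbounded. Fix a point $w\in V$ outside the compact set $f(\ovl U)$. Join $w$ inside $V$ to a point of $f(U)$ by an arc. This arc must leave $f(U)$, so it meets $\partial f(U)$. Now $\partial f(U)\subset f(\ovl U)$, and the open mapping theorem gives $\partial f(U)\subset f(\partial U)\subset J(f)$. The arc lies in $V\subset F(f)$, a contradiction. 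Hence $f(U)$ is both open and relatively closed in $V$, so $f(U)=V$, and this gives surjectivity and boundedness, since $V=f(U)\subset f(\ovl U)$ is compact-contained.

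\emph{Properness.} Let $K\subset V$ be compact and let $z_n\in U$ with $f(z_n)\in K$. By compactness of $\ovl U$, a subsequence converges to some $z\in\ovl U$, and $f(z)\in K\subset F(f)$. If $z\in\partial U\subset J(f)$, then $f(z)\in J(f)$ by invariance of the Julia set, which is a contradiction. So $z\in U$, and $f|_U^{-1}(K)$ is compact.

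\emph{Finite degree.} A proper holomorphic map between domains has finite constant degree, because the fibres are discrete and compact. The only subtle step is the claim $\partial f(U)\subset f(\partial U)$. It follows from openness of $f$ together with compactness of $\ovl U$: any boundary point of $f(U)$ is a limit $f(z_n)$ with $z_n\to z\in\ovl U$, and if $z$ were in $U$ then $f(z)$ would be interior to $f(U)$.
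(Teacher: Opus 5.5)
Your proof is correct. The paper gives no proof of this lemma; it simply calls it well known. Your argument is the standard one: $\partial f(U)\subset f(\overline U)\setminus f(U)\subset f(\partial U)\subset J(f)$ makes $f(U)$ open and relatively closed in $V$, and compactness of $\overline U$ together with $f(J(f))\subset J(f)$ gives properness and hence finite degree.

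Two small cleanups. First, the contradiction under the assumption that $V$ is unbounded is unnecessary. The arc argument works for any $w\in V\setminus f(U)$, and the clopen conclusion already gives both $V=f(U)$ and boundedness, since $V=f(U)\subset f(\overline U)$. Second, your opening claim $\partial V\subset f(\partial U)$ is only justified once $V=f(U)$ is known, but you never use it.
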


Finally, recall the following fact due to Baker, applicable to any wandering domain (bounded or not):

\begin{lemma}[{\cite[Theorem 3.1]{Bak84}}]\label{lem:simple-connectivity}
Let $V$ be a Fatou component of $f$ containing a point with a bounded orbit. Then $V$ is simply connected.  \qed
\end{lemma}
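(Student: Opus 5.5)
The plan is to prove the lemma directly by contradiction, using normality on a curve together with the maximum principle. The maximum principle is available because $f$ is entire, so every iterate $f^n$ is entire.

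\textbf{Setup.} Suppose $V$ is multiply connected and $z_0\in V$ has a bounded orbit. Since $V$ is not simply connected, I will choose a Jordan curve $\gamma\subset V$ that is not null-homotopic in $V$. Its bounded complementary component $\operatorname{int}\gamma$ then contains a point of $\C\setminus V$. I claim it in fact contains a point of $J(f)$. Otherwise $\operatorname{int}\gamma\subset F(f)$, so $\operatorname{int}\gamma\cup\gamma$ is a connected subset of $F(f)$ meeting $V$. It would then lie in $V$, and $\gamma$ would be null-homotopic in $V$, a contradiction. After enlarging $\gamma$ slightly inside $V$ if necessary, I may also assume $z_0\in\gamma$.

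\textbf{Key step: a uniform bound on $\gamma$.} I want to show that
\[
M:=\sup_{n\ge0}\max_{z\in\gamma}|f^n(z)|<\infty .
\]
Suppose not. Then there are $n_k\to\infty$ with $\max_\gamma|f^{n_k}|\to\infty$. Since $\{f^n\}$ is normal on $V$, I pass to a further subsequence along which $f^{n_k}$ converges locally uniformly on $V$ to some $\phi$, with $\phi$ either holomorphic or $\equiv\infty$. The case $\phi\equiv\infty$ is excluded because the orbit of $z_0$ is bounded. So $\phi$ is holomorphic on $V$ and hence bounded on the compact set $\gamma$. Uniform convergence on $\gamma$ then bounds $\max_\gamma|f^{n_k}|$, which contradicts the choice of $n_k$. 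This subsequence-extraction is the only delicate point; the rest is routine.

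\textbf{Conclusion.} By the maximum principle applied to the entire functions $f^n$, we get $|f^n|\le M$ on $\operatorname{int}\gamma$ for all $n$. By Montel's theorem, the family $\{f^n\}$ is therefore normal on $\operatorname{int}\gamma$, so $\operatorname{int}\gamma\subset F(f)$. This contradicts the Julia point found in the setup, so $V$ is simply connected.
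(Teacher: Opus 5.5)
Your proof is correct. The paper does not prove this lemma itself. It quotes Baker \cite[Theorem~3.1]{Bak84}, the classical fact recalled in the introduction that on a multiply connected Fatou component of a transcendental entire function the iterates tend to $\infty$ locally uniformly, so no point there has a bounded orbit.

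You instead prove directly only what is needed, and the bounded-orbit hypothesis is what keeps your proof short. Since $f^n(z_0)$ stays bounded, every limit function on $V$ is finite. Hence the \emph{whole} family $\{f^n\}$ is uniformly bounded on $\gamma$, and after the maximum principle, Montel's theorem places $\operatorname{int}\gamma$ in $F(f)$ at once. In Baker's general setting one may only assume a finite limit along some subsequence $f^{n_k}$. Boundedness of a subsequence on $\operatorname{int}\gamma$ does not give normality of $\{f^n\}$ there, so an extra input about $J(f)$ is needed to reach a contradiction (for instance, density of repelling periodic points together with Cauchy estimates).

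So the citation buys the stronger escaping conclusion, while your route is self-contained and uses only normality and the maximum principle. It is the same mechanism the paper later localises in Lemma~\ref{lem:local-filling} for the meromorphic case.

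One minor point: the normalisation $z_0\in\gamma$ in your setup is never used, since the key step only needs $z_0\in V$. Achieving it while keeping $\gamma$ a non-null-homotopic Jordan curve would need a short justification, so you can simply drop it.
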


\begin{proof}[Proof of Theorem~\ref{thm:mainA}]
Let $U$ be a wandering domain of $f$, and suppose that the orbit of $U$ is contained in a compact disk $K$. In particular, $U$ is bounded. Applying Lemma~\ref{lem:proper-component} inductively gives
\begin{equation}\label{eq:bounded-union-components}
U_n :=f^n(U)\subset K,
\qquad f\colon U_n\longrightarrow U_{n+1}
\quad\text{proper and onto}.
\end{equation}
All point orbits in the domains $U_n$ stay in $K$, so every $U_n$ is simply connected by Lemma~\ref{lem:simple-connectivity}.

The compact set $K$ contains only finitely many zeros of $f'$. Since the domains $U_n$ are pairwise disjoint, there is $N$ such that $f'$ has no zero in $U_n$ for $n\geq N$. For these $n$, the restriction in \eqref{eq:bounded-union-components} is a proper unramified covering of a simply connected domain. Hence it is a conformal isomorphism.

Use the densities $\rho_j$ from \eqref{eq:rho-j}. Lemma~\ref{lem:deficit}, with $Q=P_1$, gives a constant $C<\infty$ (independent of $j$) such that
\begin{equation}
\label{eq:uniform-deficit-K}
\int_K\left[\rho_j^2-(f^*\rho_j)^2\right]_+\dxdy\leq C
\qquad(j\geq1).
\end{equation}
Fix $j$ and put $a_{j,n} := \A_{\rho_j}(U_n)$. These quantities are finite. For $n\geq N$, the change-of-variables formula for the conformal isomorphism $f\colon U_n\to U_{n+1}$ gives
\begin{align*}
a_{j,n}-a_{j,n+1}
&=\int_{U_n}\left(\rho_j^2-(f^*\rho_j)^2\right)\dxdy\\
&\leq\int_{U_n}\left[\rho_j^2-(f^*\rho_j)^2\right]_+\dxdy.
\end{align*}
Summing from $n=N$ to some $n=L > N$, and using disjointness and containment in $K$, yields
\begin{equation}\label{eq:main-telescoping}
a_{j,N}-a_{j,L+1}\leq C.
\end{equation}
On the other hand, for any fixed $j$, we have
\[
\sum_{n\geq N}a_{j,n} \leq \A_{\rho_j}(\C\setminus P_j)<\infty,
\]
so $a_{j,L+1}\to0$ as $L \to \infty$. Sending $L$ to infinity in \eqref{eq:main-telescoping} gives $\A_{\rho_j}(U_N)\leq C$.

Now let $j\to\infty$. By \eqref{eq:fatou-recovery} and monotone convergence,
\[
\A_{\rho_{U_N}}(U_N)\leq C.
\]
But $U_N$ is simply connected and hyperbolic. Thus $\A_{\rho_{U_N}}(U_N)=\infty$, a contradiction.
\end{proof}

\section{Proof of Main Theorem~\ref{thm:mainB}}
\label{Sec:ThmB}

Throughout this section, let $f \colon \C \to \Cc$ be a transcendental meromorphic function, $U$ a wandering domain, and let $z_0\in U$ have a bounded orbit. We write
\begin{equation}\label{eq:bounded-orbit}
 z_n=f^n(z_0),\qquad |z_n|\leq M\quad(n\geq0),\qquad
 S=\overline{\{z_n:n\geq0\}}.
\end{equation}
The compact set $S$ contains no pole. Indeed, no orbit point is a pole;
and if a subsequence converged to a pole $p$, its successors would tend
to infinity, contradicting \eqref{eq:bounded-orbit}.

Because the set of poles is closed and discrete, we can choose $\eta>0$ such that $f$
is holomorphic on a neighborhood of
$\{z:\dist(z,S)\leq2\eta\}$. We fix
\begin{equation}\label{eq:pole-free-K}
 K=\{z\in\C:\dist(z,S)\leq\eta\}
\end{equation}
and note that each disk $D(z_n,2\eta)$ is pole-free. 

For $t>0$, we define the ambient hyperbolic balls
\[
 B_n(t)=\{w\in U_n:d_{U_n}(w,z_n)<t\}
\]
and choose universal covers
\[
 \pi_n\colon\D\longrightarrow U_n,\qquad \pi_n(0)=z_n.
\]
Note, that these are not necessarily injective. Covering invariance and path lifting show that, for $r_t=\tanh t$,
\begin{equation}\label{eq:ball-uniformization}
 B_n(t)=\pi_n(r_t\D).
\end{equation}
In particular, $B_n(t)$ is connected and relatively compact in $U_n$.

\begin{lemma}[Shrinking and forward inclusion]\label{lem:shrinking}
For every fixed $t>0$,
\[
 \sup_{w\in B_n(t)}|w-z_n|\longrightarrow0.
\]
Consequently, $\diam B_n(t)\to0$ and $B_n(t)\subset K$ for all sufficiently
large $n$. Moreover,
\begin{equation}\label{eq:ball-inclusion}
 f(B_n(t))\subset B_{n+1}(t)\qquad(n\geq0).
\end{equation}
\end{lemma}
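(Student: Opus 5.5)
The forward inclusion \eqref{eq:ball-inclusion} is immediate from the Schwarz--Pick lemma. Since $f(U_n)\subset U_{n+1}$, $f\colon U_n\to U_{n+1}$ is a holomorphic map between hyperbolic domains. Hence $d_{U_{n+1}}(f(w),f(z_n))\le d_{U_n}(w,z_n)<t$ for every $w\in B_n(t)$. As $f(z_n)=z_{n+1}$, this gives $f(w)\in B_{n+1}(t)$. Hyperbolicity of $U_n$ holds because $J(f)$ contains at least two points (indeed infinitely many); for meromorphic $f$, $U_n\subset\C$ since the Fatou components meet $\C$ and $f^n(U)$ avoids poles only generically. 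To be careful we work with $U_n$ as the Fatou component in $\C$.

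For the shrinking statement, I would argue by contradiction using normality. Suppose there are $\varepsilon>0$, a subsequence $n_k$ and points $w_k\in B_{n_k}(t)$ with $|w_k-z_{n_k}|\ge\varepsilon$. Lift to $U$ using the universal covers: the map $g_k:=f^{n_k}\colon U\to U_{n_k}$ lifts to $\tilde g_k\colon\D\to\D$ with $\tilde g_k(0)=0$, where we take $\pi_0$ for $U$ and $\pi_{n_k}$ for $U_{n_k}$, so that $\pi_{n_k}\circ\tilde g_k=f^{n_k}\circ\pi_0$. This does not directly reach $w_k$, since $f^{n_k}$ need not be surjective onto balls. So instead I would work directly with the maps $h_k:=\pi_{n_k}\colon\D\to\C$. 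These omit $J(f)$, which contains at least three points, so $\{h_k\}$ is a normal family; moreover $h_k(0)=z_{n_k}$ is bounded, so no subsequence tends to $\infty$.

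Passing to a subsequence, $h_k\to h$ locally uniformly on $\D$, and uniformly on $\overline{r_t\D}$. If $h$ is constant, then $\sup_{r_t\D}|h_k-z_{n_k}|\to0$, which contradicts the existence of $w_k=h_k(\zeta_k)$ with $|\zeta_k|<r_t$. So the key step, and the main obstacle, is ruling out a nonconstant limit $h$. If $h$ were nonconstant, $h(\D)$ would be open. By Hurwitz-type arguments, $h_k(\D)=U_{n_k}$ would eventually all contain a fixed small disk around some point $h(\zeta)$, namely the image of a small disk where $h$ is nonconstant. But the $U_{n_k}$ are pairwise disjoint Fatou components, which is a contradiction. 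Hence $h$ is constant and the supremum tends to $0$.

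The consequences follow directly. $\diam B_n(t)\le 2\sup_{w\in B_n(t)}|w-z_n|\to0$. Once this supremum is below $\eta$, every point of $B_n(t)$ lies within $\eta$ of $z_n\in S$, so $B_n(t)\subset K$ by \eqref{eq:pole-free-K}.
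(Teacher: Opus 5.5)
Your proposal is correct and takes essentially the same route as the paper. Both use Schwarz--Pick for the forward inclusion and Montel normality of the universal covers $\pi_n$. A nonconstant limit is excluded via Rouch\'e/Hurwitz and the pairwise disjointness of the $U_n$, and the ball uniformization $B_n(t)=\pi_n(r_t\D)$ turns a constant limit into shrinking. Your aside that $f^n(U)$ ``avoids poles only generically'' is inaccurate: $U_n\subset F(f)$ contains no poles at all, which is why $f\colon U_n\to U_{n+1}$ is holomorphic. It plays no role in the argument.
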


\begin{proof}
Fix two distinct finite points of $J(f)$. Every map $\pi_n$ omits them as well as
infinity, so the family $(\pi_n)$ is normal by the Montel's theorem. 
Given any subsequence of $(\pi_n)$, pass to
a further subsequence such that $z_n\to a\in S$ and $\pi_n$ converges
spherically locally uniformly to a limit $\pi$. Since $\pi(0)=a$, the limit function is not infinity.

We claim that $\pi$ is a constant function. If not, we can choose $\zeta_0\in\D$ with
$\pi'(\zeta_0)\neq0$, set $w=\pi(\zeta_0)$, and find a sufficiently small
closed disk centered at $\zeta_0$ on whose boundary $\pi-w$ is nonzero.
Rouch\'e's theorem implies that all sufficiently late $\pi_n$ in this
subsequence attain $w$ in that disk. 
It follows, that $w$ lies in infinitely many distinct Fatou components $U_n$, which is a contradiction.  

Thus every such limit is the constant $a$. It follows, by uniform convergence, that for each $r<1$,
\[
 \sup_{|\zeta|\leq r}|\pi_n(\zeta)-z_n|\longrightarrow0.
\]
The result now follows by using \eqref{eq:ball-uniformization}.
To obtain the forward inclusion, we apply the Schwarz--Pick lemma for the holomorphic map $f\colon U_n\to U_{n+1}$.
\end{proof}

\begin{lemma}[Local filling inside pole-free disks]\label{lem:local-filling}
Fix $t>0$. For all sufficiently large $n$, every Jordan curve
$\gamma\subset B_n(t)$ has its bounded interior contained in $U_n$.
\end{lemma}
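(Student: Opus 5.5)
The idea is to apply the maximum principle to the iterates of $f$ on the interior of $\gamma$. Fix $t>0$. By Lemma~\ref{lem:shrinking}, for all large $n$ we have $B_n(t)\subset D(z_n,\eta)$, so a Jordan curve $\gamma\subset B_n(t)$ has bounded interior $\Omega$ contained in $D(z_n,\eta)$. In particular $\Omega$ lies in the pole-free disk $D(z_n,2\eta)$, and $f$ is holomorphic on $\overline\Omega$. Forward inclusion \eqref{eq:ball-inclusion} gives $f^k(\gamma)\subset B_{n+k}(t)$ for all $k\ge0$.

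The plan is to prove by induction on $k$ that $f^k$ is holomorphic on a neighbourhood of $\overline\Omega$ and that $f^k(\overline\Omega)\subset D(z_{n+k},\eta)$. Assume this holds for $k$. Then $f^{k+1}=f\circ f^k$ is holomorphic near $\overline\Omega$, because $f^k(\overline\Omega)$ sits in the pole-free disk $D(z_{n+k},2\eta)$. By the maximum principle applied to $f^{k+1}-z_{n+k+1}$ on $\Omega$,
\[
\sup_{\overline\Omega}|f^{k+1}-z_{n+k+1}|=\max_{\gamma}|f^{k+1}-z_{n+k+1}|\le \sup_{w\in B_{n+k+1}(t)}|w-z_{n+k+1}|<\eta .
\]
The last inequality uses Lemma~\ref{lem:shrinking} once more, applied for all indices $\ge n$ once $n$ is large. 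This closes the induction.

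It follows that the family $\{f^k\}_{k\ge0}$ is holomorphic on $\Omega$ and uniformly bounded there, with all values in $\{|w|\le M+\eta\}$. By Montel's theorem it is normal on $\Omega$. Moreover, every $f^k$ is holomorphic on $\Omega$ with image in $K$, so no point of $\Omega$ is a pole of any iterate. Hence $\Omega\subset F(f)$.

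Finally, $\Omega\cup\gamma$ is connected, lies in $F(f)$ near $\gamma$ (because $\gamma\subset U_n$ and $U_n$ is open), and meets $U_n$. So $\Omega\cup U_n$ is a connected subset of $F(f)$, and therefore $\Omega\subset U_n$.

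The main obstacle is a uniformity point. The threshold on $n$ must not depend on $\gamma$, and the shrinking bound is needed simultaneously for every later index $n+k$. Both are handled by taking $n\ge N$, where $N$ is chosen so that $\sup_{B_m(t)}|w-z_m|<\eta$ for all $m\ge N$, which Lemma~\ref{lem:shrinking} provides. A second point needs care: the maximum principle must be applied to $f^{k+1}$, which is only known to be meromorphic a priori. The induction avoids this problem, since each step shows that the image of $\overline\Omega$ stays within distance $\eta$ of the orbit and hence avoids poles.
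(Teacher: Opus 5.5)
Your proposal is correct and follows the paper's proof essentially step for step. You choose $N$ via Lemma~\ref{lem:shrinking} uniformly for all $m\ge N$, run the same induction ($f^k$ holomorphic near $\overline\Omega$ with $f^k(\overline\Omega)\subset D(z_{n+k},\eta)$, using forward inclusion on $\gamma$ and the maximum principle), conclude $\Omega\subset F(f)$ by Montel, and finish with connectedness, where your use of $\overline\Omega=\Omega\cup\gamma$ and the openness of $U_n$ just spells out why $\Omega$ meets $U_n$.
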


\begin{proof}
By Lemma~\ref{lem:shrinking}, we can choose $N = N(t)$ large enough such that
\[
 B_m(t)\subset D(z_m,\eta)\qquad(m\geq N).
\]
Fix $n\geq N$. Let $\gamma\subset B_n(t)$ be a Jordan curve and let $\Omega$ denote its bounded interior. 
Since $\gamma\Subset D(z_n,\eta)$, the maximum modulus principle applied to the identity
map gives
\[
 \overline\Omega\subset D(z_n,\eta).
\]

We prove by induction that, for every $k\geq0$,
\begin{equation}\label{eq:filled-orbit}
 \begin{gathered}
 f^k\text{ is holomorphic on a neighborhood of }\overline\Omega,\\
 f^k(\overline\Omega)\subset D(z_{n+k},\eta).
 \end{gathered}
\end{equation}
The case $k=0$ follows by our choice of $\Omega$, 
so suppose that our assertion holds for $k$.
Then $f^k(\overline\Omega)$ lies in the pole-free disk $D(z_{n+k},\eta)$ and the composition 
$f^{k+1}=f\circ f^k$ is holomorphic on a neighborhood of $\overline\Omega$. 
On the boundary \eqref{eq:ball-inclusion} gives
\[
 f^{k+1}(\gamma)\subset B_{n+k+1}(t)
 \subset D(z_{n+k+1},\eta),
\]
and since the boundary image is compact, the maximum principle gives
\[
 \max_{w\in\overline\Omega}|f^{k+1}(w)-z_{n+k+1}|
 \leq\max_{w\in\gamma}|f^{k+1}(w)-z_{n+k+1}|<\eta,
\]
which proves the induction.

By \eqref{eq:filled-orbit}, every iterate is defined and holomorphic on
$\Omega$, and
\[
 |f^k(w)|\leq M+\eta\qquad(w\in\Omega,\ k\geq0).
\]
Thus $\Omega\subset F(f)$ by normality. 
Since the domain $\Omega$ is connected and meets $U_n$, we have $\Omega\subset U_n$.
\end{proof}

\begin{lemma}[Eventually embedded hyperbolic balls]\label{lem:embedded-balls}
For every fixed $t>0$, the restriction
\[
 \pi_n|_{r_t\D}\colon r_t\D\longrightarrow B_n(t),
 \qquad r_t=\tanh t,
\]
is a conformal isomorphism for all sufficiently large $n$. In particular,
these balls are simply connected and
\begin{equation}\label{eq:ball-area}
 \A_{\rho_{U_n}}(B_n(t))=\pi\sinh^2t
 \quad\text{for all sufficiently large }n.
\end{equation}
\end{lemma}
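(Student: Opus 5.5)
Fix $t>0$ and choose a slightly larger radius $t'>t$, for instance $t'=2t$. By Lemma~\ref{lem:local-filling} applied with $t'$, we may take $n$ large enough that every Jordan curve in $B_n(t')$ bounds a domain contained in $U_n$. The plan is to show that $\pi_n$ is injective on $r_t\D$. The tool is the correspondence between non-injectivity of the universal cover on a hyperbolic disk and short essential loops. If $\pi_n(\zeta_1)=\pi_n(\zeta_2)$ with $\zeta_1\neq\zeta_2$ in $r_t\D$, then there is a nontrivial deck transformation $g$ with $g(\zeta_1)=\zeta_2$. The hyperbolic geodesic in $\D$ from $\zeta_1$ to $\zeta_2$ lies in $r_t\D$, because hyperbolic disks are convex. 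Its image is a closed curve $\sigma$ through $w=\pi_n(\zeta_1)$, contained in $B_n(t)$, that is not null-homotopic in $U_n$, since the lift does not close up.

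Next we replace $\sigma$ by a Jordan curve that is still essential in $U_n$ and still lies in $B_n(t')$. A closed curve that is homotopically nontrivial in a plane domain can be taken, after a small perturbation inside $B_n(t')$, to be a piecewise-smooth curve with finitely many transverse self-intersections. Such a curve decomposes into finitely many simple loops. If each of these simple loops were null-homotopic in $U_n$, the whole curve would be null-homotopic. Hence at least one simple loop $\gamma\subset B_n(t')$ is essential in $U_n$. Alternatively, we can argue with homology. A nontrivial element of $\pi_1$ of a plane domain whose loops all lie in the small Euclidean disk $D(z_n,\eta)$ must wind around some point of $\C\setminus U_n$; indeed, a closed curve in a planar domain is null-homotopic iff \dots\ is not literally true. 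So the cleaner route is to use simple loops and the Jordan curve theorem.

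Now apply Lemma~\ref{lem:local-filling}. The bounded interior $\Omega$ of $\gamma$ lies in $U_n$, so $\gamma$ together with its interior is a closed disk in $U_n$. Therefore $\gamma$ is null-homotopic in $U_n$, which is a contradiction. This proves that $\pi_n|_{r_t\D}$ is injective. Its image is $B_n(t)$ by \eqref{eq:ball-uniformization}, so it is a conformal isomorphism onto $B_n(t)$, and in particular $B_n(t)$ is simply connected. For the area, $\pi_n$ is a local isometry from $\rho_\D$ to $\rho_{U_n}$. By the change-of-variables formula, $\A_{\rho_{U_n}}(B_n(t))=\A_{\rho_\D}(r_t\D)=\int_{|\zeta|<r_t}(1-|\zeta|^2)^{-2}\dxdy=\pi r_t^2/(1-r_t^2)=\pi\sinh^2 t$.

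The main obstacle is the topological step of extracting an essential Jordan curve from a non-simple essential closed curve while staying inside $B_n(t')$. The decomposition into simple loops handles this, with small perturbations kept inside $B_n(t')$, which is open and contains the compact set $\sigma$. One could avoid the issue differently: the set of points not in $U_n$ that are enclosed by $\sigma$ with nonzero winding would lie in a bounded complementary component of $\sigma$. By an argument like the one in Lemma~\ref{lem:local-filling}, using the maximum principle on the filled hull of $\sigma$, all of these points would be Fatou points of $U_n$. Then $\sigma$ would be null-homologous in $U_n$. However, being null-homologous is weaker than being null-homotopic, so the Jordan-curve route is the one to pursue.
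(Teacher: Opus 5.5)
Your proof is correct, but it obtains injectivity by a different mechanism from the paper. The paper argues directly. Since $\pi_1$ of the plane domain $B_n(t)$ is generated by classes of Jordan curves, Lemma~\ref{lem:local-filling} makes the inclusion $\iota\colon B_n(t)\hookrightarrow U_n$ trivial on fundamental groups. The lifting criterion then gives a holomorphic section $s\colon B_n(t)\to\D$ with $\pi_n\circ s=\iota$ and $s(z_n)=0$, and uniqueness of lifts forces $s\circ\pi_n=\mathrm{id}$ on $r_t\D$.

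You argue by contradiction instead. A nontrivial deck transformation identifying two points of $r_t\D$, together with the geodesic convexity of $r_t\D$, produces an explicit loop $\sigma\subset B_n(t)$ that is essential in $U_n$. You then extract an essential Jordan curve by general position and loop removal. The paper's route is shorter and needs neither convexity nor perturbation, at the price of quoting the generation-by-Jordan-curves fact. Your route essentially proves that fact in the case needed.

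To make your argument airtight, spell out the induction. Take the first parameter $s_2$ at which the perturbed curve revisits an earlier point $\sigma(s_1)$; then $\sigma|_{[s_1,s_2]}$ is a Jordan curve in $B_n(t)$. If it is null-homotopic in $U_n$, excising it leaves the based homotopy class unchanged and strictly lowers the number of double points.

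Two clean-ups are also worth making. The auxiliary radius $t'$ is unnecessary, since $\sigma$ is a compact subset of the open set $B_n(t)$ and small perturbations can be kept inside it. The abandoned homology aside, including the unfinished sentence ending in ``iff \dots'', should be deleted.
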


\begin{proof}
Take $n$ sufficiently large for the conclusion of Lemma~\ref{lem:local-filling} to hold.
Since $B_n(t)$ is a plane domain, its fundamental group is generated by classes represented by Jordan curves. By Lemma~\ref{lem:local-filling}, every Jordan curve in $B_n(t)$ bounds a simply connected region in $U_n$, and hence is null-homotopic in $U_n$. 

Since the inclusion $B_n(t) \hookrightarrow U_n$ induces the trivial homomorphism on fundamental groups, the lifting criterion for the universal cover $\pi_n$ gives a lift
\[
 s\colon B_n(t)\longrightarrow\D,\qquad s(z_n)=0,
 \qquad\pi_n\circ s=\iota,
\]
where $\iota \colon B_n(t) \to U_n$ is the inclusion.
The lift is holomorphic since $\pi_n$ is locally biholomorphic. 
On $r_t\D$, the map $s\circ\pi_n$ and the inclusion $\iota \colon r_t\D \to \D$ both fix $0$ and are both lifts of the same map with respect to $\pi_n$.
The uniqueness of lifts thus yields
\[
 s\circ\pi_n=\operatorname{id}_{r_t\D},
\]
hence $\pi_n$ is injective on $r_t\D$. Surjectivity onto the ball follows
from \eqref{eq:ball-uniformization}.

Using covering invariance and change of variables, we obtain
\begin{align*}
 \A_{\rho_{U_n}}(B_n(t))
 &=\int_{r_t\D}\rho_{\D}^2\dxdy
 =2\pi\int_0^{\tanh t}\frac{r}{(1-r^2)^2}\dd r\\
 &=\frac{\pi\tanh^2t}{1-\tanh^2t}
 =\pi\sinh^2t.
\end{align*}
\end{proof}

\begin{remark}\label{rem:injectivity-radius}
Equivalently, the injectivity radius of $(U_n,\rho_{U_n})$ at $z_n$ tends
to infinity. One can define this radius here as the supremum of the
numbers $t$ for which the centered universal cover is injective on
$(\tanh t)\D$. This does not assert that $U_n$ itself is simply connected.
The area in \eqref{eq:ball-area} uses the restriction of the ambient
density $\rho_{U_n}$, not the complete intrinsic density of $B_n(t)$.
\end{remark}

\begin{lemma}[Eventual injectivity on fixed-radius balls]\label{lem:injectivity}
For every fixed $t>0$, there is $N(t)$ such that $f|_{B_n(t)}$ is injective
for all $n\geq N(t)$.
\end{lemma}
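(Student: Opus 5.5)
The plan is to argue by contradiction, using the embedded balls of Lemma~\ref{lem:embedded-balls}, the local filling property of Lemma~\ref{lem:local-filling}, and the fact that $f$ has only finitely many critical points in the compact set $K$.

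\textbf{Step 1: no critical points in the balls.} By Lemma~\ref{lem:shrinking}, for large $n$ we have $B_n(t)\subset D(z_n,\eta)\subset K$. Since $f$ is holomorphic near $K$, $f'$ has only finitely many zeros in $K$. The balls $B_n(t)$ are pairwise disjoint because the $U_n$ are. Hence $f'$ has no zeros on $B_n(t)$ for $n$ large, so $f$ is locally injective there.

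\textbf{Step 2: reduce to a small-diameter statement.} Suppose injectivity fails along a subsequence. Then there are $a_n\neq b_n$ in $B_n(t)$ with $f(a_n)=f(b_n)$. By Lemma~\ref{lem:embedded-balls}, for large $n$ the ball $B_n(2t)$ is conformally a disk via $\pi_n$. Join $a_n$ to $b_n$ by the hyperbolic geodesic segment $\sigma_n\subset B_n(t)$ (or by the image under $\pi_n$ of a radial path through $0$). Its image $f(\sigma_n)$ is a closed curve in $B_{n+1}(t)$, by \eqref{eq:ball-inclusion}.

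\textbf{Step 3: rule out the closed loop.} Inside the simply connected ball $B_n(2t)$, the curve $\sigma_n$ is a path. The goal is to locate a Jordan domain $\Omega\subset B_n(2t)$ on which $f$ is a proper map of degree at least $2$ onto a Jordan domain $\Omega'\subset U_{n+1}$; Lemma~\ref{lem:local-filling} guarantees that the interiors of Jordan curves in $B_{n+1}(t)$ lie in $U_{n+1}$. Riemann--Hurwitz then forces a critical point of $f$ in $\Omega\subset B_n(2t)$, contradicting Step~1 applied with $2t$ in place of $t$.

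To build $\Omega$, take the component $W$ of $f^{-1}(\Omega')$ containing $a_n$. Here $\Omega'$ is the interior of a Jordan curve in $B_{n+1}(t)$ that surrounds $f(\sigma_n)$, obtained for instance as the image of a hyperbolic circle of radius slightly more than $t$. The main obstacle is showing that $W$ is relatively compact in $B_n(2t)$ and also contains $b_n$.

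The first approach I would try is to take $\Omega'=B_{n+1}(s)$ with $s\in(t,2t)$, which is a disk by Lemma~\ref{lem:embedded-balls}. Let $W$ be the component of $f^{-1}(B_{n+1}(s))\cap B_n(2t)$ containing $B_n(t)$. This component contains both points because $B_n(t)$ is connected and, by \eqref{eq:ball-inclusion}, maps into $B_{n+1}(t)$.

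For properness, I would use the maximum-principle and filling argument of Lemma~\ref{lem:local-filling}, together with the shrinking of diameters, to show that $f$ does not push points of $\partial B_n(2t)$ near $B_{n+1}(s)$. This is the step most likely to need care. An alternative is a compactness and rescaling argument: normalize by $\pi_n$ and $\pi_{n+1}$ to get maps $g_n=\pi_{n+1}^{-1}\circ f\circ\pi_n$ of $\tanh(2t)\D$ into $\D$, fixing $0$. Pass to a limit $g$, which is holomorphic and satisfies $|g'|\le 1$ in the hyperbolic sense. If $g$ is an automorphism of $\D$, then Hurwitz's theorem gives injectivity of $g_n$ on $r_t\D$ for large $n$. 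If $g$ is not an automorphism, I would seek a contradiction with the area estimates, in the style of Lemma~\ref{lem:deficit}, though that may belong to the later argument.

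Given Steps~1–3, $f|_W\colon W\to B_{n+1}(s)$ is a proper, unramified map onto a disk, hence a conformal isomorphism. In particular $f$ is injective on $B_n(t)\subset W$, which gives $N(t)$.
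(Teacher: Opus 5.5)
There is a genuine gap in Step 3: the properness of $f|_W\colon W\to B_{n+1}(s)$. You flag it but do not establish it, and for $W\subset B_n(2t)$ it can actually fail. Schwarz--Pick only gives $f(B_n(2t))\subset B_{n+1}(2t)$. Nothing prevents $f$ from contracting the hyperbolic metric so strongly that $f(\overline{B_n(2t)})\subset B_{n+1}(s)$. In that case $W=B_n(2t)$ and $f|_W$ is not even onto $B_{n+1}(s)$, so it is not proper.

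Neither of your proposed fixes works. The maximum-principle and filling argument of Lemma~\ref{lem:local-filling} gives only Euclidean upper bounds on images, never a lower bound on expansion, so it cannot supply properness. The rescaling argument settles only the case where the limit $g$ is a rotation. You defer the remaining case (e.g.\ $g\equiv 0$) to the area estimates, but that is circular: the telescoping in the proof of Theorem~\ref{thm:mainB} needs exactly this lemma to write $\A_{\rho_j}(f(B_n(t)))=\int_{B_n(t)}(f^*\rho_j)^2\dxdy$. Without injectivity that inequality points the wrong way.

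Your Step~1 also targets the wrong obstruction. The critical point that could destroy injectivity is the accumulation point $a$ of $(z_n)$, which lies outside $B_n(t)$ for large $n$. A domain with no critical points can still fail to be injective near a point where $f$ is locally $m$-to-$1$. For example, take a thin horseshoe around $a$ whose angular width in the local coordinate exceeds $2\pi/m$.

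The missing idea is to obtain properness from a fixed Euclidean chart rather than from the hyperbolic balls. This is the paper's argument:
\begin{enumerate}
\item Pass to a subsequence with $z_{n_k}\to a$, and take the proper chart $f\colon A\to D$ of Lemma~\ref{lem:local-charts} about $a$. Its only possible critical value is $f(a)$.
\item For large $k$, you have $B_{n_k}(t)\subset A$ and $B_{n_k+1}(t)\subset D$ by Lemma~\ref{lem:shrinking}.
\item The target $B_{n_k+1}(t)$ is simply connected by Lemma~\ref{lem:embedded-balls}.
\item $f(a)\notin B_{n_k+1}(t)$ for large $k$, because $f(a)$ lies in at most one of the distinct components $U_m$.
\item Then $f\colon A\cap f^{-1}(B_{n_k+1}(t))\to B_{n_k+1}(t)$ is proper, being a full preimage under a proper map, and it is unramified. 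So it is a covering of a simply connected domain, and each component maps conformally onto the ball.
\item The connected set $B_{n_k}(t)$ lies in one such component by \eqref{eq:ball-inclusion}, so $f$ is injective on it.
\end{enumerate}
This route needs neither $B_n(2t)$, nor Riemann--Hurwitz, nor the finiteness of critical points in $K$.
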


\begin{proof}
Suppose otherwise. Choose $n_k\to\infty$ such that $f|_{B_{n_k}(t)}$ is
noninjective. Pass to a subsequence with $z_{n_k}\to a\in S$. The point $a$
is not a pole, and $z_{n_k+1}\to f(a)$.

Choose a proper local chart $f\colon A\to D$ about $a$ as in
Lemma~\ref{lem:local-charts}, with $D$ centered at $f(a)$. Its only possible
critical value is $f(a)$. By Lemma~\ref{lem:shrinking}, for large $k$,
\[
 B_{n_k}(t)\subset A,\qquad B_{n_k+1}(t)\subset D.
\]
By Lemma~\ref{lem:embedded-balls}, the target ball is simply connected for
large $k$. Moreover, the selected value $f(a)$ belongs to at most one of the
pairwise distinct components $U_{n_k+1}$. After discarding finitely many
indices, the target ball contains no critical value of the local chart.

Properness of the chart and avoidance of its critical values imply that
\[
 f\colon A\cap f^{-1}(B_{n_k+1}(t))
 \longrightarrow B_{n_k+1}(t)
\]
is an unramified covering. Each connected component of its source maps
conformally onto the simply connected target. The connected set
$B_{n_k}(t)$ lies in one such source component by
\eqref{eq:ball-inclusion}. Hence $f|_{B_{n_k}(t)}$ is injective, which is a
contradiction.
\end{proof}

\begin{proof}[Proof of Theorem~\ref{thm:mainB}]
The proof is similar to the proof of Theorem~\ref{thm:mainA}.

Suppose that some $z_0\in U$ has a bounded orbit. 
Let $K$ be the compact pole-free set from \eqref{eq:pole-free-K}.
Lemma~\ref{lem:deficit}, applied with $Q=P_1$, gives a constant $C<\infty$ such that
\begin{equation}\label{eq:uniform-deficit-K}
 \int_K\left[\rho_j^2-(f^*\rho_j)^2\right]_+\dxdy\leq C
 \qquad(j\geq1).
\end{equation}
Choose $t>0$ so that
\begin{equation}\label{eq:choose-t}
 \pi\sinh^2t>C.
\end{equation}
By Lemmas~\ref{lem:shrinking}, \ref{lem:embedded-balls}, and
\ref{lem:injectivity}, choose $N$ such that for every $n\geq N$,
\begin{equation}\label{eq:tail-properties}
 \begin{gathered}
 B_n(t)\subset K,\qquad f|_{B_n(t)}\text{ is injective},\\
 \A_{\rho_{U_n}}(B_n(t))=\pi\sinh^2t.
 \end{gathered}
\end{equation}
We also have $f(B_n(t))\subset B_{n+1}(t)$.

Fix $j$ and put $a_{j,n}:=\A_{\rho_j}(B_n(t))$. These quantities are finite
by \eqref{eq:finite-area-j}. Forward inclusion, injectivity, and change of
variables give, for $n\geq N$,
\begin{align}
 a_{j,n}-a_{j,n+1}
 &\leq \A_{\rho_j}(B_n(t))-\A_{\rho_j}(f(B_n(t)))\notag\\
 &=\int_{B_n(t)}\bigl(\rho_j^2-(f^*\rho_j)^2\bigr)\dxdy\notag\\
 &\leq\int_{B_n(t)}
 \left[\rho_j^2-(f^*\rho_j)^2\right]_+\dxdy.
 \label{eq:ball-one-step}
\end{align}
The balls lie in pairwise disjoint Fatou components and are contained in $K$.
Summing \eqref{eq:ball-one-step} from $n=N$ to $n=L > N$ and using
\eqref{eq:uniform-deficit-K} yields
\begin{equation}\label{eq:ball-telescoping}
 a_{j,N}-a_{j,L+1}\leq C.
\end{equation}
For this fixed $j$, disjointness and finite total area imply
\[
 \sum_{n\geq N}a_{j,n}
 \leq \A_{\rho_j}(\C\setminus P_j)<\infty.
\]
Hence $a_{j,L+1}\to0$ as $L \to \infty$. Letting $L\to\infty$ in
\eqref{eq:ball-telescoping} gives
\[
 \A_{\rho_j}(B_N(t))\leq C\qquad(j\geq1).
\]
Now let $j\to\infty$. By \eqref{eq:fatou-recovery} and monotone convergence,
\[
 \A_{\rho_{U_N}}(B_N(t))\leq C.
\]
This contradicts \eqref{eq:tail-properties} and \eqref{eq:choose-t}.
Therefore, no point of $U$ has a bounded orbit, proving
Theorem \ref{thm:mainB}.
\end{proof}



\begin{thebibliography}{BHKMT}

\bibitem[Bak76]{Bak76}
I.\,N. Baker,
\emph{An entire function which has wandering domains},
J. Austral. Math. Soc. Ser. A \textbf{22} (1976), no.~2, 173--176.
\href{https://doi.org/10.1017/S1446788700015287}{doi:10.1017/S1446788700015287}.

\bibitem[Bak]{Bak84}
I.\,N. Baker,
\emph{Wandering domains in the iteration of entire functions},
Proc. London Math. Soc. (3) \textbf{49} (1984), no.~3, 563--576.

\bibitem[BKL]{BKL90}
I.\,N. Baker, J. Kotus, and Y. L\"u,
\emph{Iterates of meromorphic functions. II. Examples of wandering domains},
J. London Math. Soc. (2) \textbf{42} (1990), no.~2, 267--278.
\href{https://doi.org/10.1112/jlms/s2-42.2.267}{doi:10.1112/jlms/s2-42.2.267}.

\bibitem[BBH]{BBH84}
K.\,F. Barth, D.\,A. Brannan, and W.\,K. Hayman,
\emph{Research problems in complex analysis},
Bull. London Math. Soc. \textbf{16} (1984), no.~5, 490--517.
\href{https://doi.org/10.1112/blms/16.5.490}{doi:10.1112/blms/16.5.490}.


\bibitem[BM]{BM}
A.\,F. Beardon and D. Minda,
\emph{The hyperbolic metric and geometric function theory},
in: S. Ponnusamy, T. Sugawa, and M. Vuorinen (eds.),
\emph{Quasiconformal Mappings and Their Applications},
Narosa, New Delhi, 2007, 9--56.

\bibitem[B]{Bergweiler}
W. Bergweiler,
\emph{Iteration of meromorphic functions},
Bull. Amer. Math. Soc. (N.S.) \textbf{29} (1993), no.~2, 151--188.
\href{https://doi.org/10.1090/S0273-0979-1993-00432-4}{doi:10.1090/S0273-0979-1993-00432-4}.

\bibitem[BHKMT]{BHKMT93}
W. Bergweiler, M. Haruta, H. Kriete, H.-G. Meier, and N. Terglane,
\emph{On the limit functions of iterates in wandering domains},
Ann. Acad. Sci. Fenn. Ser. A I Math. \textbf{18} (1993), no.~2, 369--375.
\href{https://www.acadsci.fi/mathematica/Vol18/bergweil.pdf}{Full text}.

\bibitem[BKL]{BKL92}
I. N. Baker, J. Kotus, and Y. L\"u,
\emph{Iterates of meromorphic functions. IV. Critically
finite functions},
Results Math. \textbf{22} (1992), nos.~3--4, 651--656.
\href{https://doi.org/10.1007/BF03323112}{doi:10.1007/BF03323112}.

\bibitem[BM]{BM}
W. Bergweiler and S. Morosawa,
\emph{Semihyperbolic entire functions},
Nonlinearity \textbf{15} (2002), no.~5, 1673--1684.
\href{https://doi.org/10.1088/0951-7715/15/5/316}{doi:10.1088/0951-7715/15/5/316}.

\bibitem[Bis]{Bis15}
C.\,J. Bishop,
\emph{Constructing entire functions by quasiconformal folding},
Acta Math. \textbf{214} (2015), no.~1, 1--60.
\href{https://www.math.stonybrook.edu/~bishop/papers/FoldingACTA.pdf}{Full text}.

\bibitem[BFJK]{BFJK}
K. Bara\'nski, N. Fagella, X. Jarque, and B. Karpi\'nska,
\emph{Fatou components and singularities of meromorphic functions},
Proc. Roy. Soc. Edinburgh Sect. A \textbf{150} (2020),
no.~2, 633--654.
\href{https://doi.org/10.1017/prm.2018.142}{doi:10.1017/prm.2018.142}.

\bibitem[BH]{BH89}
D.\,A. Brannan and W.\,K. Hayman,
\emph{Research problems in complex analysis},
Bull. London Math. Soc. \textbf{21} (1989), no.~1, 1--35.
\href{https://doi.org/10.1112/blms/21.1.1}{doi:10.1112/blms/21.1.1}.

\bibitem[EL84]{EL84}
A.\,E. Eremenko and M.\,Yu. Lyubich,
\emph{Iterates of entire functions},
Soviet Math. Dokl. \textbf{30} (1984), no.~3, 592--594;
translated from Dokl. Akad. Nauk SSSR \textbf{279} (1984), no.~1, 25--27.

\bibitem[EL87]{EL87}
A.\,E. Eremenko and M.\,Yu. Lyubich,
\emph{Examples of entire functions with pathological dynamics},
J. London Math. Soc. (2) \textbf{36} (1987), no.~3, 458--468.
\href{https://www.math.stonybrook.edu/~bishop/classes/math627.S13/Eremenko-Lyubich-Pathological.pdf}{Full text}.

\bibitem[EL92]{EL92}
A.\,E. Eremenko and M.\,Yu. Lyubich,
\emph{Dynamical properties of some classes of entire functions},
Ann. Inst. Fourier (Grenoble) \textbf{42} (1992), no.~4, 989--1020.
\href{https://doi.org/10.5802/aif.1318}{doi:10.5802/aif.1318}.


\bibitem[FJL]{FJL19}
N. Fagella, X. Jarque, and K. Lazebnik,
\emph{Univalent wandering domains in the Eremenko--Lyubich class},
J. Anal. Math. \textbf{139} (2019), no.~1, 369--395.
\href{https://arxiv.org/abs/1711.10629}{arXiv:1711.10629}.

\bibitem[FP]{FP26}
N. Fagella and L. Pardo-Sim\'on,
\emph{From pathological to paradigmatic: A retrospective on Eremenko and Lyubich's entire functions},
J. London Math. Soc. (2) \textbf{113} (2026), no.~1, e70382.
\href{https://doi.org/10.1112/jlms.70382}{doi:10.1112/jlms.70382}.

\bibitem[GK]{GK86}
L.\,R. Goldberg and L. Keen,
\emph{A finiteness theorem for a dynamical class of entire functions},
Ergodic Theory Dynam. Systems \textbf{6} (1986), no.~2, 183--192.
\href{https://doi.org/10.1017/S0143385700003394}{doi:10.1017/S0143385700003394}.

\bibitem[HL]{HL19}
W.\,K. Hayman and E.\,F. Lingham,
\emph{Research Problems in Function Theory},
Fiftieth Anniversary Edition, Problem Books in Mathematics,
Springer, Cham, 2019.
\href{https://doi.org/10.1007/978-3-030-25165-9}{doi:10.1007/978-3-030-25165-9}.

\bibitem[Her]{Her84}
M.\,R. Herman,
\emph{Exemples de fractions rationnelles ayant une orbite dense sur la sph\`ere de Riemann},
Bull. Soc. Math. France \textbf{112} (1984), no.~1, 93--142.
\href{https://doi.org/10.24033/bsmf.2002}{doi:10.24033/bsmf.2002}.

\bibitem[MS]{MS20}
D. Mart\'i-Pete and M. Shishikura,
\emph{Wandering domains for entire functions of finite order in the Eremenko--Lyubich class},
Proc. London Math. Soc. (3) \textbf{120} (2020), no.~2, 155--191.
\href{https://doi.org/10.1112/plms.12288}{doi:10.1112/plms.12288}.

\bibitem[McM]{McM}
C.\,T. McMullen,
\emph{Renormalization and 3-Manifolds Which Fiber over the Circle},
Annals of Mathematics Studies, vol.~142,
Princeton University Press, Princeton, NJ, 1996.

\bibitem[MRG]{MRG13}
H. Mihaljevi\'c-Brandt and L. Rempe-Gillen,
\emph{Absence of wandering domains for some real entire functions with bounded singular sets},
Math. Ann. \textbf{357} (2013), no.~4, 1577--1604.
\href{https://doi.org/10.1007/s00208-013-0936-z}{doi:10.1007/s00208-013-0936-z}.

\bibitem[Nic]{Nic13}
D.\,A. Nicks,
\emph{Wandering domains in quasiregular dynamics},
Proc. Amer. Math. Soc. \textbf{141} (2013), no.~4, 1385--1392.
\href{https://arxiv.org/abs/1101.1483}{arXiv:1101.1483}.

\bibitem[PRW]{PRW}
N. Prochorov, L. Rempe, and J. Waterman,
\emph{Absence of bounded-orbit wandering domains},
preprint, 2026.
\href{https://arxiv.org/abs/2609.28279v1}{arXiv:2609.28279v1}.


\bibitem[RvS]{RvS15}
L. Rempe-Gillen and S. van Strien,
\emph{Density of hyperbolicity for classes of real transcendental
entire functions and circle maps},
Duke Math. J. \textbf{164} (2015), no.~6, 1079--1137.
\href{https://doi.org/10.1215/00127094-2885764}{doi:10.1215/00127094-2885764}.

\bibitem[Sul]{Sul85}
D. Sullivan,
\emph{Quasiconformal homeomorphisms and dynamics. I. Solution of the Fatou--Julia problem on wandering domains},
Ann. of Math. (2) \textbf{122} (1985), 401--418.
\href{https://doi.org/10.2307/1971308}{doi:10.2307/1971308}.

\bibitem[Y]{Ye}
Z. Ye,
\emph{A new proof of no-wandering domain theorems without quasiconformal techniques},
preprint, 2026, version~1, 20 September 2026.
\href{https://arxiv.org/abs/2609.23834v1}{arXiv:2609.23834v1}.

\bibitem[Zhe]{Zhe03}
J.-H. Zheng,
\emph{Singularities and limit functions in iteration of meromorphic functions},
J. London Math. Soc. (2) \textbf{67} (2003), no.~1, 195--207.
\href{https://doi.org/10.1112/S0024610702003800}{doi:10.1112/S0024610702003800}.


\end{thebibliography}
\end{document}